\documentclass[11pt,reqno]{amsart}
\usepackage{amsmath, amssymb, amsfonts, xcolor}
\usepackage[hidelinks]{hyperref}
\numberwithin{equation}{section}
\usepackage{amsthm}
\newtheorem{thm}{Theorem}[section]

\newtheorem{prop}[thm]{Proposition}
\newtheorem{lem}[thm]{Lemma}
\newtheorem{conj}[thm]{Conjecture}
\newtheorem{dfn}[thm]{Definition}

\newtheorem{remark}[thm]{Remark}
\newtheorem{cor}[thm]{Corollary}

\usepackage[a4paper, top = 1.2in, bottom = 1.2in, left = 1.2in, right = 1.2in]{geometry}

\usepackage{enumitem}
\newlist{steps}{enumerate}{1}
\setlist[steps, 1]{label = Step \arabic*:}

\numberwithin{equation}{section}

\newcommand{\A}{\mathbb{A}}
\newcommand{\C}{\mathbb{C}}
\newcommand{\F}{\mathbb{F}}
\newcommand{\N}{\mathbb{N}}
\newcommand{\Q}{\mathbb{Q}}

\newcommand{\R}{\mathbb{R}}

\newcommand{\Z}{\mathbb{Z}}

\newcommand{\mbH}{\mathbb{H}}

\newcommand{\mcO}{\mathcal{O}}

\newcommand{\mfc}{\mathfrak{c}}

\newcommand{\mfn}{\mathfrak{n}}

\newcommand{\mfp}{\mathfrak{p}}
\newcommand{\mfq}{\mathfrak{q}}
\newcommand{\mfP}{\mathfrak{P}}

\newcommand{\SL}{\mathrm{SL}}

\newcommand{\GL}{\mathrm{GL}}

\newcommand{\Gal}{\mathrm{Gal}}
\newcommand{\Frob}{\mathrm{Frob}}

\newcommand{\End}{\mathrm{End}}
\newcommand{\Aut}{\mathrm{Aut}}
\newcommand{\Tr}{\mathrm{Tr}}

\def\1{1\!\!1}

\newcommand{\bsmat}[4]{\bigl[ \begin{smallmatrix} #1 & #2 \\ #3 & #4 \end{smallmatrix} \bigr]}

\title[Generalized Fermat equation over $K$]{Generalized Fermat equation over number fields}

\author[S. Sahoo]{Satyabrat Sahoo}
\address[S. Sahoo]{Yau Mathematical Sciences Center, Tsinghua University, 
	Beijing 100084, China.}
\email{satyabrat.sahoo.94@gmail.com}

\keywords{Generalized Fermat equation, Modularity, Galois representations, $S$-unit equations, number fields, imaginary quadratic fields}
\subjclass[2020]{Primary 11D41, 11F80; Secondary 11G05, 11R27}
\date{\today}

\begin{document}
\begin{abstract}
	Let $K$ be a number field with ring of integers $\mcO_K$, and let
	$A,B,C\in\mcO_K\setminus\{0\}$. Denote by $S_K'$ the set of prime
	ideals of $\mcO_K$ dividing $2ABC$. Assuming two standard conjectures
	concerning the modularity of mod-$p$ Galois representations and the
	Eichler--Shimura correspondence over number fields, we study the
	asymptotic behaviour of the generalized Fermat equation
	$
	Ax^p+By^p+Cz^p=0$ 
	over $K$. Using the modular method, we establish an asymptotic
	criterion in terms of the solutions of the associated $S_K'$-unit
	equation. As an application, we obtain asymptotic results for certain
	imaginary quadratic fields $K=\Q(\sqrt{-d})$. In particular, for a
	family of squarefree integers $d$, we determine the relevant
	$S_K'$-unit solutions explicitly and deduce that the generalized Fermat
	equation has no asymptotic solutions. Finally, we
	show that this family of squarefree integers has relative density
	$5/6$ among all squarefree positive integers.
\end{abstract}
	
	\maketitle
	\section{Introduction}
	
	Diophantine equations constitute one of the oldest and most active areas
	of research in number theory. Perhaps the most celebrated example is
	Fermat's equation
	$
	x^n+y^n=z^n,$ with $ n\geq 3.
	$
	Fermat's Last Theorem asserts that this equation has no non-trivial
	solutions in coprime integers. Wiles proved the theorem, completing the
	argument jointly with Taylor, through the modularity of semistable elliptic curves
	over $\Q$; see \cite{W95,TW95}. A central ingredient in the proof is the
	modular method, which combines the arithmetic of Frey elliptic curves
	with results on mod-$p$ Galois representations, including irreducibility
	results (cf.~\cite{M78}) and Ribet's level-lowering theorem
	(cf.~\cite{R90}).
	
	The success of the modular method in the proof of Fermat's Last Theorem
	has motivated extensive study of the more general class of Diophantine
	equations
	\begin{equation}
		\label{generalized Fermat eqn}
		Ax^p+By^q+Cz^r=0,
		\qquad
		A,B,C\in\Z\setminus\{0\},
	\end{equation}
	where $A,B,C$ are fixed coprime integers and $p,q,r\geq2$ are integers.
	Such equations are commonly referred to as \emph{generalized Fermat
		equations}, and the triple $(p,q,r)$ is called the \emph{signature} of
	\eqref{generalized Fermat eqn}.
%	The success of the modular method in the proof of Fermat's Last Theorem
%	motivated extensive study of the more general Diophantine equation
%	\begin{equation}
%		\label{generalized Fermat eqn}
%		Ax^p+By^q+Cz^r=0,
%		\qquad
%		A,B,C\in\Z\setminus\{0\},
%	\end{equation}
%	where $A,B,C$ are fixed coprime integers and $p,q,r\geq2$ satisfy
%	$
%	\frac1p+\frac1q+\frac1r<1.
%	$
%	The triple $(p,q,r)$ is called the \emph{signature} of
%	\eqref{generalized Fermat eqn}. 
%	The following conjecture, commonly
%	referred to as the generalized Fermat conjecture, predicts the
%	finiteness of primitive solutions when the coefficients are fixed
%	(cf.~\cite{DG95}).
%	
%	\begin{conj}
%		\label{DG conj}
%		For fixed coprime integers $A,B,C\in\Z\setminus\{0\}$, the
%		generalized Fermat equation~\eqref{generalized Fermat eqn} has
%		only finitely many non-trivial coprime integer solutions.
%	\end{conj}
%	
%	Darmon and Granville \cite{DG95} established the finiteness predicted
%	by Conjecture~\ref{DG conj} when the signature is fixed. More precisely,
%	for fixed $A,B,C\in\Z\setminus\{0\}$ and fixed integers $p,q,r\geq2$
%	satisfying
%	$
%	\frac1p+\frac1q+\frac1r<1,$
%	they proved that \eqref{generalized Fermat eqn} has only finitely many
%	non-trivial coprime integer solutions. 
%Determining these solutions explicitly, or obtaining uniform results as one or more of the exponents
%	vary, is considerably more difficult. 
	The modular method has been
	particularly successful in this direction, notably for equations of
	signatures $(p,p,p)$, $(p,p,2)$, and $(p,p,3)$.
	
	We first recall some developments concerning signature $(p,p,p)$. Let
	$p$ be a rational prime. Freitas and Siksek \cite{FS15} initiated the
	study of the \emph{asymptotic Fermat equation} over totally real number
	fields, considering
	$
	x^p+y^p+z^p=0
	$
	as $p$ varies; see Definition~\ref{asymptotic solution}. Their approach
	extends the modular method from $\Q$ to totally real fields and relates
	the existence of solutions for sufficiently large $p$ to solutions of
	certain $S$-unit equations.
	
	Deconinck \cite{D16} subsequently extended this framework to the
	generalized Fermat equation
	$
	Ax^p+By^p+Cz^p=0
	$
	over totally real number fields, where $A,B,C\in\mcO_K$ and $ABC$ is
	odd, in the sense that
	$
	\mfP\nmid ABC$ 
	$\text{for every prime }\mfP\mid2.
	$
	The restriction that the ground field be totally real is closely
	connected with the modularity results available for elliptic curves
	over totally real fields. In \cite{SS18} and \cite{KO20},
	\c{S}eng\"un--Siksek and Kara--Ozman, respectively, developed analogous
	methods over general number fields, conditional on suitable modularity
	and Eichler--Shimura-type conjectures.
	
	The case in which the coefficients are supported at primes above $2$
	exhibits somewhat different local behaviour and has also received
	considerable attention. Over $\Q$, Ribet \cite{R97} proved that
	$
	x^p+2^r y^p+z^p=0,$ with $2\leq r<p,$
	has no non-trivial coprime integer solutions. Kumar and Sahoo
	\cite{KS24 Diop1} studied asymptotic solutions of
	$
	x^p+2^r y^p+z^p=0
	$
	over totally real number fields $K$, for $r\in\N$. More recently, the author
	\cite{S26} extended this study to the generalized Fermat equation
	$
	Ax^p+By^p+Cz^p=0,$
	over totally real fields, where $A,B,C\in\mcO_K\setminus\{0\}$ with no parity restriction on $ABC$.
	
	The principal aim of the present article is to extend this latter
	framework from totally real fields to \emph{arbitrary number fields}.
	More precisely, we study the asymptotic behaviour of the generalized
	Fermat equation of signature $(p,p,p)$, namely
	\begin{equation}
		\label{GFE introduction}
		Ax^p+By^p+Cz^p=0,
		\qquad
		A,B,C\in\mcO_K\setminus\{0\},
	\end{equation}
	over an arbitrary number field $K$. Since modularity and the relevant Eichler--Shimura
	correspondence are not presently known in the required generality over
	arbitrary number fields, our main result is conditional on the standard
	conjectures formulated in
	Conjectures~\ref{conj 1} and~\ref{conj 2}. Under these conjectures, we
	obtain an asymptotic criterion for \eqref{GFE introduction} in terms of
	solutions of an $S$-unit equation associated to the primes dividing
	$2ABC$.
	
	A key feature of our argument is the Frey elliptic curve
	\[
	E_{a,b,c}:
	y^2=x(x-Aa^p)(x+Bb^p)
	\]
	attached to a putative solution $(a,b,c)$. We analyze its local
	behaviour, Serre conductor, and mod-$p$ Galois representation. After
	applying the conjectural modularity and Eichler--Shimura machinery, we
	obtain an elliptic curve with full $2$-torsion and controlled reduction.
	The associated Legendre parameter then gives a solution to an
	$S$-unit equation. Suitable local bounds for such $S$-unit solutions
	lead to a contradiction with the reduction behaviour forced by the
	Frey curve (see Theorems~\ref{main result1 for Ax^p+By^p+Cz^p=0} and
	\ref{auxilary result x^2=By^p+Cz^p over W_K} for details).
	
	As an application, we consider imaginary quadratic fields. For
	$
	K=\Q(\sqrt{-d}),
	$
	with $d\geq5$ squarefree and $-d\not\equiv1\pmod8$, the prime $2$ is
	either inert or ramified in $K$ and hence there is a unique prime above
	$2$. This makes it possible to determine explicitly the relevant
	$S$-unit solutions when the coefficients are powers of $2$. We thereby
	obtain an asymptotic result for a family of imaginary quadratic fields.
	Moreover, the set of squarefree positive integers $d$ satisfying
	$
	-d\not\equiv1\pmod8
	$
	has relative density $5/6$ among the squarefree positive integers (cf. Theorem~\ref{imaginary quadratic generalized Fermat} and Proposition~\ref{thm for density} for more details).
	
	For completeness, we mention that the modular method over number fields
	has also been developed extensively for other signatures. Generalized
	Fermat equations of signature $(p,p,2)$,
	$
	Ax^p+By^p+Cz^2=0,
	$
	have been studied, for example, in
	\cite{IKO20,M22,KS24 Diop1,KS Diop2,CIKO26}, while equations of
	signature $(p,p,3)$,
	$
	Ax^p+By^p+Cz^3=0,
	$
	have been investigated in
	\cite{M22,IKO23,KS Diop3,KNO26}.

\subsection{Main results}
\label{notations section for Ax^p+By^p+Cz^p=0} 	
Let $K$ be a number field and $A,B,C \in \mcO_K\setminus \{0\}$. In this section, we state the main results for the solutions of the generalized Fermat equation
\begin{equation}
	\label{Ax^p+By^p+Cz^p=0}
	Ax^p+By^p+Cz^p=0
\end{equation} 
over the number field $K$ with prime exponent $p\geq 3$. 
Let $P$ denote the set of all non-zero prime ideals of $\mcO_K$, and put
\[
S_K:=\{\mfP\in P:\mfP\mid2\},
\qquad
S_K^{\prime}:=\{\mfP\in P:\mfP\mid2ABC\}.
\]

\begin{dfn}[Trivial solution]
	A solution $(a, b, c)\in \mcO_K^3$ to equation~\eqref{Ax^p+By^p+Cz^p=0} is said to be trivial if $abc=0$ and non-trivial otherwise.
	We say $(a, b, c)\in \mcO_K^3$ is primitive if $a\mcO_K+b\mcO_K+c\mcO_K=\mcO_K$.
\end{dfn}  
\begin{dfn}
	\label{def for W_K}
	For any prime $\mfP \in S_K$, let $W_{K, \mfP}$ be the set of all non-trivial primitive solutions $(a,b,c)\in\mcO_K^3$ of equation~\eqref{Ax^p+By^p+Cz^p=0} with $\mfP\mid abc$.
\end{dfn}

\begin{dfn}
	\label{asymptotic solution}
	We say that the Diophantine equation $Ax^p+By^p+Cz^p=0$ has no asymptotic solution in a set $S \subseteq \mcO_K^3$ if there exists a constant $V_{K,A,B,C}>0$, depending on $K,A,B,C$, such that, for every prime $p>V_{K,A,B,C}$, the equation has no non-trivial primitive solution in $S$.
\end{dfn}

\begin{remark}
	\label{remark for W_K}
	Let $\mfP \in S_K$. If $(a, b, c)\in W_{K, \mfP}$ is a solution of equation~\eqref{Ax^p+By^p+Cz^p=0} with exponent $p > \max\{v_\mfP(A),v_\mfP(B),v_\mfP(C)\}$, then $\mfP$ divides exactly one of $a,b,c$. Indeed, suppose that $\mfP$ divides both $a$ and $b$. Then $\mfP^p\mid Aa^p+Bb^p=-Cc^p$. Since $p>v_\mfP(C)$, it follows that $\mfP\mid c$, contradicting the primitivity of $(a,b,c)$. The other two cases are analogous.
\end{remark}

%\subsection{Main results}
%\label{section for main result of x^2=By^p+2^rz^p} 
For any set $S \subseteq P$, let $\mcO_{S}:=\{\alpha \in K : v_\mfP(\alpha)\geq 0 \text{ for all } \mfP \in P \setminus S\}$ be the ring of $S$-integers in $K$ and $\mcO_{S}^*$ be the $S$-units of $\mcO_{S}$. For $\mfP\in S_K$ and $A,B,C\in\mcO_K\setminus\{0\}$, put
\begin{align*}
	\alpha_{\mfP}
	&:=8v_{\mfP}(2)+v_{\mfP}(BCA^{-2}),\\
	\beta_{\mfP}
	&:=8v_{\mfP}(2)+v_{\mfP}(ACB^{-2}),\\
	\gamma_{\mfP}
	&:=8v_{\mfP}(2)+v_{\mfP}(ABC^{-2}),
\end{align*}
%\subsubsection{Asymptotic results}
We are now ready to state the first main result.
\begin{thm}
	\label{main result1 for Ax^p+By^p+Cz^p=0}
	Let $K$ be a number field for which Conjectures~\ref{conj 1} and~\ref{conj 2} hold, let $A,B,C \in \mcO_K\setminus \{0\}$, and fix a prime $\mfP\in S_K$ such that
	$\alpha_{\mfP}\beta_{\mfP}\gamma_{\mfP}\neq0$.
	Suppose that every solution $(\lambda,\mu)$ of the $S_K^\prime$-unit equation
	\begin{equation}
		\label{S_K-unit solution}
		\lambda+\mu=1, \ \lambda, \mu \in \mcO_{S_K^\prime}^\ast,
	\end{equation}
	satisfies
	\begin{equation}
		\label{assumption for main result x^p+y^p=2^rz^p}
		\max \left\{|v_\mfP(\lambda)|,|v_\mfP(\mu)| \right\}\leq 4v_\mfP(2).
	\end{equation}
	Then the equation $Ax^p+By^p+Cz^p=0$ has no asymptotic solution in $W_{K,\mfP}$. In other words, there exists a constant $V=V_{K,A,B,C}>0$ such that the equation
	$
	Ax^p+By^p+Cz^p=0
	$ 
	has no non-trivial primitive solution $(a,b,c)\in\mcO_K^3$ satisfying $\mfP\mid abc$ for any prime $p>V$.
\end{thm}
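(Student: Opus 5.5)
We argue by contradiction, following the modular method of Freitas--Siksek as adapted to general number fields by \c{S}eng\"un--Siksek. Suppose that for arbitrarily large primes $p$ there is a non-trivial primitive solution $(a,b,c)\in W_{K,\mfP}$. Attach to it the Frey curve $E=E_{a,b,c}: y^2=x(x-Aa^p)(x+Bb^p)$, and let $\bar\rho_{E,p}$ be its mod-$p$ representation. We first show, by standard computations with Tate's algorithm, that for $p$ large (in terms of $K,A,B,C$) three things hold. First, $E$ is semistable away from $S_K'$. Second, the Serre conductor $\mathcal N_p$ of $\bar\rho_{E,p}$ is supported on $S_K'$ with exponents bounded independently of $p$, so only finitely many conductors occur. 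Third, $\bar\rho_{E,p}$ is irreducible; for this we use the Freitas--Siksek style irreducibility argument, which for general number fields works for $p$ large because $E$ has potentially multiplicative reduction at some prime, and this is where the local analysis at $\mfP$ enters.

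Next we apply Conjectures~\ref{conj 1} and~\ref{conj 2}. Conjecture~\ref{conj 1} makes $\bar\rho_{E,p}$ arise from a mod-$p$ eigenform of level $\mathcal N_p$. Since there are finitely many levels, the standard lifting/finiteness argument shows that for $p$ large this form lifts to a complex eigenform $\mathfrak f$ with Hecke eigenvalues in $\Q$. Conjecture~\ref{conj 2} then gives an elliptic curve $E'/K$ of conductor $\mathcal N_p$ with $\bar\rho_{E,p}\simeq\bar\rho_{E',p}$ (or a fake elliptic curve, which we rule out for large $p$ as in \cite{SS18}). Comparing traces of Frobenius at a few auxiliary primes and using the Hasse bound shows that for $p$ large $E'$ has full $2$-torsion, or is isogenous to such a curve. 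Moreover $E'$ has potentially good reduction at primes outside $S_K'$ where $E$ is multiplicative (the exponent of $\Delta$ being divisible by $p$). We also need that $E'$ has potentially multiplicative reduction at $\mfP$. This holds because $E$ has potentially multiplicative reduction at $\mfP$: by Remark~\ref{remark for W_K} and $\mfP\mid abc$, exactly one of $a,b,c$ is divisible by $\mfP$, and the relevant quantity among $\alpha_\mfP,\beta_\mfP,\gamma_\mfP$ is nonzero. Comparing inertia at $\mfP$ then gives $p\mid\#\bar\rho_{E',p}(I_\mfP)$, which forces $v_\mfP(j_{E'})<0$.

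Write $E': y^2=x(x-e_1)(x-e_2)$ with Legendre parameter $\lambda=e_1/e_2$ and $\mu=1-\lambda$. Then
\[
j_{E'}=2^8\,\frac{(\lambda^2-\lambda+1)^3}{\lambda^2(1-\lambda)^2}.
\]
Potentially good reduction outside $S_K'$ gives $\lambda,\mu\in\mcO_{S_K'}^\ast$, so $(\lambda,\mu)$ solves \eqref{S_K-unit solution}. Now set $t=\max\{|v_\mfP(\lambda)|,|v_\mfP(\mu)|\}$. If $t=0$ then $v_\mfP(j_{E'})\ge 8v_\mfP(2)\ge0$. If $t>0$, the usual computation gives $v_\mfP(j_{E'})=8v_\mfP(2)-2t$. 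Hypothesis \eqref{assumption for main result x^p+y^p=2^rz^p} gives $t\le 4v_\mfP(2)$, so in either case $v_\mfP(j_{E'})\ge0$. This contradicts potentially multiplicative reduction at $\mfP$, and taking $V$ to be the largest of the finitely many bounds on $p$ proves the theorem.

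The main obstacle is the local analysis at $\mfP$ when $\mfP\mid ABC$ with no parity assumption. We must show that $E$ has potentially multiplicative reduction at $\mfP$ and that $p\mid v_\mfP(j_E)$ fails in the way needed for the inertia comparison. The plan is to compute $v_\mfP(j_E)$ directly in the three cases $\mfP\mid a$, $\mfP\mid b$, $\mfP\mid c$. For example, if $\mfP\mid b$ one gets
\[
v_\mfP(j_E)=8v_\mfP(2)+v_\mfP(\text{coefficient ratio})-2p\,v_\mfP(b),
\]
which is $-2p\,v_\mfP(b)+\beta_\mfP$ up to relabelling. This is negative for large $p$, and because $\beta_\mfP\neq0$ it is not divisible by $p$. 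That non-divisibility is exactly what the condition $\alpha_\mfP\beta_\mfP\gamma_\mfP\neq0$ is designed to guarantee, and it is what lets the inertia argument transfer multiplicativity from $E$ to $E'$.
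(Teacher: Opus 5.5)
Your proposal is correct and follows the paper's proof step for step. The steps are: the Frey curve with $v_\mfP(j_E)=\alpha_\mfP-2p\,v_\mfP(a)$ (and analogues), irreducibility via \c{S}eng\"un--Siksek, Conjectures~\ref{conj 1} and~\ref{conj 2} with lifting, $\Q_f=\Q$, exclusion of fake elliptic curves, an isogenous $E'$ with full $2$-torsion, and the Legendre computation $v_\mfP(j_{E'})=8v_\mfP(2)-2t\geq0$. The one point to tighten is that Conjecture~\ref{conj 1} needs absolute irreducibility and finite flatness at $p$; the paper gets the former by combining irreducibility with the order-$p$ element from $I_\mfP$ to show $\bar\rho_{E,p}$ is surjective.
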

\begin{remark}
	In particular, when $K$ is totally real, the hypotheses involving
	Conjectures~\ref{conj 1} and~\ref{conj 2} are unnecessary. In this case,
	Theorem~\ref{main result1 for Ax^p+By^p+Cz^p=0} holds unconditionally
	under the stated hypothesis on the $S_K'$-unit equation in \eqref{assumption for main result x^p+y^p=2^rz^p} (see \cite[Theorem~2.5]{S26} for further details).
\end{remark}
%We write $(ES)$ for ``either $[K: \Q]$ is odd or Conjecture \ref{ES conj} holds for $K$." Let $U_K:=\{ \mfP \in S_K: 3 \nmid v_\mfP(2) \}$. We now show that equation~\eqref{Ax^p+By^p+Cz^p=0} has no asymptotic solution in $K^3$. More precisely;
%\begin{thm}
%	\label{main result2 for Ax^p+By^p+Cz^p=0}
%	Let $K$ be a totally real number field satisfying the condition $(ES)$. Let $A,B,C \in \mcO_K\setminus \{0\}$ and $S_K^{\prime}:= \{ \mfP \in P :\ \mfP|2ABC \}$.
%	Suppose, for every solution $(\lambda, \mu)$ to the $S_K^\prime$-unit equation~\eqref{S_K-unit solution}
%	%    	$$	\lambda+\mu=1, \ \lambda, \mu \in \mcO_{S}^\ast,$$
%	there exists some $\mfP \in U_K$ that satisfies
%	\begin{equation}
%		\label{assumption for main result2 for Ax^p+By^p+Cz^p=0}
%		\max \left\{|v_\mfP(\lambda)|,|v_\mfP(\mu)| \right\}\leq 4v_\mfP(2) \text{ and } v_\mfP(\lambda\mu)\equiv v_\mfP(2) \pmod 3.
%	\end{equation}
%	If $A\pm B \pm C \neq 0$, $\max\{v_\mfP(A), v_\mfP(BC)\} \leq 4v_\mfP(2)$ and  $v_\mfP(ABC) \equiv 0$ or $2v_\mfP(2) \pmod 3$, then the equation $Ax^p+By^p+Cz^p=0$ has no asymptotic solution in $K^3$.
%\end{thm}
%\begin{remark}
%	If $A,B,C \in \Z \setminus \{0\}$, then we prove Theorem~\ref{main result2 for Ax^p+By^p+Cz^p=0} without using the condition $A\pm B \pm C \neq 0$ (cf. Proposition~\ref{relax the assumption}).
%\end{remark}
We now consider the case when $A,B,C \in \{u 2^rd^s: u\in\mcO_K^\times, r,s \in \Z_{\geq 0}\}$, for odd primes $d$.
	\begin{prop}
	\label{prop for solution of S-unit eqn even soln}
	\label{S unit crit 2d}
	Let $K$ be a number field for which Conjectures~\ref{conj 1} and~\ref{conj 2} hold. Let $d \geq 3$ be a prime with $d \equiv 1 \pmod 4$. Assume
	\begin{enumerate}
		\item $2 \nmid h_K^+$;
		\item $2$ is inert in $K$ and write $2\mcO_K =\mfP$;
		\item $d$ is inert in $K$;
		\item $d^{[K: \Q]} \not\equiv 1,9,17,25\pmod {32}$.
	\end{enumerate}
	If $A,B,C \in \{u 2^rd^s: u\in\mcO_K^\times, r,s \in \Z_{\geq 0}\}$ with $\alpha_{\mathfrak P}
	\beta_{\mathfrak P}
	\gamma_{\mathfrak P}
	\neq0$, then the equation $Ax^p+By^p+Cz^p=0$ has no asymptotic solution in $W_{K, \mfP}$.
\end{prop}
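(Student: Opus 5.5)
The plan is to reduce the statement to Theorem~\ref{main result1 for Ax^p+By^p+Cz^p=0}. Since $A,B,C$ are of the form $u2^rd^s$, every prime dividing $2ABC$ lies above $2$ or $d$. By hypotheses (2) and (3), $2\mcO_K=\mfP$ and $d\mcO_K=\mfq$ are primes, so $S_K'\subseteq\{\mfP,\mfq\}$. Enlarging $S_K'$ to $\{\mfP,\mfq\}$ only creates more unit-equation solutions, so it is enough to show the following: every solution $(\lambda,\mu)$ of $\lambda+\mu=1$ with $\lambda,\mu\in\mcO_{\{\mfP,\mfq\}}^\ast$ satisfies $\max\{|v_\mfP(\lambda)|,|v_\mfP(\mu)|\}\le 4v_\mfP(2)=4$. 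The hypothesis $\alpha_\mfP\beta_\mfP\gamma_\mfP\neq0$ is given, so the theorem then applies directly.

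To bound the solutions, I will use the standard symmetry of the $S$-unit equation. The group generated by $\lambda\mapsto 1-\lambda$ and $\lambda\mapsto 1/\lambda$ permutes the six values $\lambda,1-\lambda,1/\lambda,\dots$ and preserves the quantity to be bounded. So I may assume $v_\mfP(\lambda)\ge v_\mfP(\mu)$. If $v_\mfP(\lambda)>0$, then $v_\mfP(\mu)=0$. If both valuations are negative they are equal, and replacing $(\lambda,\mu)$ by $(1/\lambda,-\mu/\lambda)$ reduces to the previous case. If both are $0$ there is nothing to prove. It therefore remains to treat $\lambda=2^t\cdot(\text{$\mfq$-unit})$ with $t\ge1$ and $\mu$ a $\mfP$-adic unit, and to show $t\le4$.

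Next I pass to the $\mfq$-adic valuation by the same case split. Either $\mu$ is an honest unit times $d^{k}$ with $k\ge0$ and $\lambda$ is a $\mfq$-adic unit, or a symmetric arrangement holds. Writing $\lambda=\varepsilon 2^t d^{s}$ and $\mu=\eta d^{k}$ with units $\varepsilon,\eta$, the equation becomes, in the typical case, $\varepsilon 2^t+\eta d^k=1$ (possibly after the symmetries). Now I use hypothesis (1), $2\nmid h_K^+$, as Freitas--Siksek do: it gives that a unit which is a square mod $4$ (or, more precisely, totally positive / locally a square at $\mfP$) is a global square. Reducing $\eta d^k=1-\varepsilon2^t$ modulo $\mfP^{t}$ with $t\ge5$ shows that $\eta d^k\equiv1\pmod{32}$. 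If $k$ is even, the $d^k$ factor is a square and I get $\eta$ a square, so I can rewrite and factor $1-\eta d^k=(1-\sqrt{\eta}d^{k/2})(1+\sqrt{\eta}d^{k/2})$. Both factors are $\{\mfP,\mfq\}$-units whose difference is $2\sqrt\eta d^{k/2}$, which gives a descent that forces $t$ small. If $k$ is odd, I take norms to $\Q$ and use hypothesis (4). The congruence $\eta d^k\equiv1\pmod{2^5}$ in $\mcO_K/32$, after applying $\mathrm{Norm}$ and noting that units have norm $\pm1$, gives $\pm d^{k[K:\Q]}\equiv1\pmod{32}$. The condition $d\equiv1\pmod4$ removes the sign. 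Since $d^{2[K:\Q]}$ is always in $\{1,9,17,25\}$ modulo $32$, hypothesis (4) excludes $d^{k[K:\Q]}\equiv1$ when $k$ is odd, a contradiction. The residual configurations, where the $\mfq$-power sits on $\lambda$ as well, are handled by the same norm computation applied to $\varepsilon 2^t d^s$ versus $1-\mu$.

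I expect the main obstacle to be the even-$k$ descent. Here one must check carefully that $2\nmid h_K^+$ really upgrades "$\eta\equiv$ square mod $\mfP^{5}$" to "$\eta$ is a square". I would try to deduce this from the standard lemma that, under $2\nmid h_K^+$ with $2$ inert, $\mcO_K^\times/\mcO_K^{\times2}$ injects into the local units modulo squares. After that, iterating the factorisation should drive the valuation down to $t\le4$. I would also verify that the bookkeeping of the symmetric cases leaves no configuration with $|v_\mfP|>4$.
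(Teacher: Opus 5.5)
Your reduction is the same as the paper's. Since $S_K'\subseteq T=\{\mfP,\mfq\}$ with $\mfP=(2)$ and $\mfq=(d)$, it suffices to show $\max\{|v_\mfP(\lambda)|,|v_\mfP(\mu)|\}\le 4$ for every $T$-unit solution and then apply Theorem~\ref{main result1 for Ax^p+By^p+Cz^p=0}. The paper does not prove that bound itself; it quotes it from \cite[Proposition~3.5]{S26b}.

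Several pieces of your attempt to prove the bound are correct:
\begin{itemize}
\item The symmetry reduction to $v_\mfP(\lambda)=t\ge1$, $v_\mfP(\mu)=0$ is fine.
\item The norm argument is fine. Hypothesis (4) together with $d\equiv1\pmod4$ says exactly that $d\equiv5\pmod8$ and $[K:\Q]$ is odd, so $\mu\equiv1\pmod8$ already forces $k$ even, and in fact $2^{t-2}\mid k$.
\item The local-to-global square step is not the obstacle you fear. If $\eta\in\mcO_K^\times$ is a square in $K_\mfP$, then $K(\sqrt{\eta})/K$ is unramified at every finite prime, so it lies in the narrow Hilbert class field. Hence $2\nmid h_K^+$ gives $\eta\in\mcO_K^{\times2}$, and no positivity condition is needed.
\end{itemize}

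The genuine gap is the descent. Write $\mu=w^2$ with the sign chosen so that $v_\mfP(1-w)=t-1$ and $v_\mfP(1+w)=1$. The factorisation then only produces another solution $\bigl(\tfrac{1-w}{2},\tfrac{1+w}{2}\bigr)$, with $\mfP$-valuations $(t-2,0)$. Nothing in your argument rules out solutions of valuation $t-2$, so no contradiction arises. Iterating lowers the valuation of the derived solutions, not of the original one. Infinite descent only lets you assume $t\in\{5,6\}$, after which you land at $3$ or $4$ and stop.

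Two further points make this worse:
\begin{itemize}
\item When $v_\mfq(\mu)>0$, the derived solution is $(2^{t-2}\varepsilon_2,\varepsilon_1)$ with $\varepsilon_1,\varepsilon_2\in\mcO_K^\times$. This is a solution of the pure $\{\mfP\}$-unit equation, on which hypotheses (3)--(4) have no purchase.
\item When $\mu=\eta$ is itself a unit and $\lambda=\varepsilon2^td^s$, we have $k=0$, so the norm computation yields nothing. Your closing sentence about these \emph{residual configurations} is not an argument.
\end{itemize}

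What is missing is an actual obstruction. For example, when $v_\mfq(\mu)>0$ you can finish modulo $\mfq$. Here $w\in\mfq$, so $\varepsilon_1=(1+w)/2\equiv2^{-1}\pmod{\mfq}$. On the other hand $\varepsilon_1=1-2^{t-2}\varepsilon_2\equiv1\pmod8$, so $\varepsilon_1$ is a global square. This is impossible, because $2$ is a non-square in $\mcO_K/\mfq\cong\F_{d^{[K:\Q]}}$. However, the configurations in which $\mu$ is a unit still need an argument your sketch does not contain. This includes in particular $\varepsilon2^t+\eta=1$ with $\varepsilon,\eta\in\mcO_K^\times$. That is exactly the part the paper delegates to \cite{S26b}.
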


\begin{remark}
	The proof of Theorem~\ref{main result1 for Ax^p+By^p+Cz^p=0} relies on the explicit bound \eqref{assumption for main result x^p+y^p=2^rz^p} for all the solutions of the $S_K^\prime$-unit equation~\eqref{S_K-unit solution}. In \cite{S14}, Siegel proved that for any finite set $S \subseteq P$, the $S$-unit equation has only a finite number of solutions over any number field $K$, and hence the $S_K^\prime$-unit equation~\eqref{S_K-unit solution} has only a finite number of solutions over $K$.
\end{remark}
We call $(2,-1)$, $(-1,2)$, and $(\frac12,\frac12)$ the irrelevant solutions of the $S_K^\prime$-unit equation~\eqref{S_K-unit solution}; all other solutions are called relevant.
The following is an immediate corollary of Theorem~\ref{main result1 for Ax^p+By^p+Cz^p=0}.
\begin{cor}
	\label{irrelevant cor}
	Let $K,A,B,C$, and the fixed prime $\mfP\in S_K$ be as in
	Theorem~\ref{main result1 for Ax^p+By^p+Cz^p=0}. If the
	$S_K^\prime$-unit equation~\eqref{S_K-unit solution} has only
	irrelevant solutions, then $Ax^p+By^p+Cz^p=0$ has no asymptotic
	solution in $W_{K,\mfP}$.
\end{cor}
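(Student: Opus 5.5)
The corollary will follow directly from Theorem~\ref{main result1 for Ax^p+By^p+Cz^p=0}. All hypotheses of that theorem other than \eqref{assumption for main result x^p+y^p=2^rz^p} are assumed: $K$ satisfies Conjectures~\ref{conj 1} and~\ref{conj 2}, and $\mfP\in S_K$ satisfies $\alpha_\mfP\beta_\mfP\gamma_\mfP\neq 0$. So it remains to check that every solution $(\lambda,\mu)$ of the $S_K'$-unit equation~\eqref{S_K-unit solution} satisfies the valuation bound \eqref{assumption for main result x^p+y^p=2^rz^p}. By hypothesis, the only such solutions are the irrelevant ones $(2,-1)$, $(-1,2)$ and $(\tfrac12,\tfrac12)$.

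First I will confirm that these three pairs really are solutions of \eqref{S_K-unit solution}. Each sums to $1$, and $2,-1,\tfrac12$ are $S_K'$-units. This holds because the only primes at which they have nonzero valuation divide $2$, and $S_K\subseteq S_K'$ since $2\mid 2ABC$.

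Next I compute the valuations at the fixed $\mfP\mid 2$. We have $v_\mfP(-1)=0$, $v_\mfP(2)=v_\mfP(2)$ and $v_\mfP(\tfrac12)=-v_\mfP(2)$. For the pairs $(2,-1)$ and $(-1,2)$ this gives
\[
\max\{|v_\mfP(\lambda)|,|v_\mfP(\mu)|\}=v_\mfP(2),
\]
and for $(\tfrac12,\tfrac12)$ it again gives $v_\mfP(2)$. Since $v_\mfP(2)\ge 1$, in every case the maximum satisfies $v_\mfP(2)\le 4v_\mfP(2)$, so \eqref{assumption for main result x^p+y^p=2^rz^p} holds for every solution.

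Theorem~\ref{main result1 for Ax^p+By^p+Cz^p=0} then yields that $Ax^p+By^p+Cz^p=0$ has no asymptotic solution in $W_{K,\mfP}$. There is no real obstacle here; the only point needing care is the verification that $S_K\subseteq S_K'$, which ensures the irrelevant pairs are genuinely $S_K'$-units.
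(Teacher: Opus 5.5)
Your proposal is correct and matches the paper's proof essentially verbatim: you check that each irrelevant pair $(2,-1)$, $(-1,2)$, $(\tfrac12,\tfrac12)$ satisfies $\max\{|v_\mfP(\lambda)|,|v_\mfP(\mu)|\}= v_\mfP(2)\le 4v_\mfP(2)$, and then you invoke Theorem~\ref{main result1 for Ax^p+By^p+Cz^p=0}. Your extra verification that these pairs are genuine $S_K'$-units is harmless but not needed, since the hypothesis only requires the valuation bound for whatever solutions actually exist.
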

%We employed the modular approach to prove Theorem~\ref{main result1 for Ax^p+By^p+Cz^p=0}.
%The following are some crucial steps in the modular approach:
%\begin{steps}
%	\item For any non-trivial solution $(a, b, c)\in K^3$ to the Diophantine equation $Ax^p+By^p+Cz^p=0$, we attach a \textbf{Frey elliptic curve $E/K$}.
%	\item Then we prove the \textbf{modularity} of $E$ for $p \gg 0$, $E$ has \textbf{semi-stable reduction} at all primes $\mfq \in P$ with $\mfq|p$, and the mod-$p$ Galois representation $\bar{\rho}_{E,p}$ is \textbf{irreducible} for $p \gg 0$.
%	
%	\item 
%	Using \textbf{level lowering results} of $\bar{\rho}_{E,p}$, we have $\bar{\rho}_{E,p} \sim \bar{\rho}_{f,p}$, for some Hilbert modular newform defined over $K$ of parallel weight $2$ with rational eigenvalues of lower level.
%	\item Prove that the finitely many Hilbert modular newforms that occur in Step $3$ do not correspond to $\bar{\rho}_{E,p}$ to get a \textbf{contradiction}.
%\end{steps}
%	
\subsection{Differences from the totally real case}
The reader comparing Theorem~\ref{main result1 for Ax^p+By^p+Cz^p=0}
with the corresponding result over totally real number fields in
\cite{S26} might naturally expect the proof to be largely the same. Although our argument follows the same modular approach, there are
some important differences. In particular, when $K$ has complex embeddings,
two additional difficulties arise which are absent in the totally real
setting.
\begin{enumerate}
	\item[(i)] For a general number field $K$, Conjecture~\ref{conj 1}
	associates to the mod-$p$ representation
	$
	\bar{\rho}_{E,p}:G_K\longrightarrow \GL_2(\mathbb{F}_p)
	$
	a mod-$p$ eigenform of weight $2$. In the totally real case, such a
	mod-$p$ eigenform can be lifted to a complex Hilbert modular eigenform
	of weight $2$, which is then used in the level-lowering argument.
	For a number field with complex embeddings, however, this lifting is
	not available in general. In our setting this difficulty is overcome
		by assuming $p$ to be sufficiently large and using the asymptotic
		nature of the problem. This makes the
	constant $V_{K,A,B,C}$ in Theorem~\ref{main result1 for Ax^p+By^p+Cz^p=0}
	ineffective. An effective version would require effective bounds for
	the relevant torsion in the cohomology of locally symmetric spaces (see \cite[\S2.1]{SS18} for details).
	
	\item[(ii)] There is a second difficulty arising from the fact that
	weight $2$ eigenforms over a totally complex number field need not
	correspond to elliptic curves. Even when the eigenvalues are rational,
	the object predicted by the Langlands correspondence may be a fake
	elliptic curve rather than an elliptic curve. This phenomenon does not
	occur over totally real fields. In our situation, we overcome this
	difficulty by exploiting the local behaviour of the mod-$p$
	representation at the primes above $2$. More precisely, the images of inertia of the mod-$p$ representation attached to the Frey curve at suitable primes above $2$ are incompatible with those arising from a fake elliptic curve.
\end{enumerate}

%Apart from these two issues, the overall strategy is analogous to the
%totally real case. Starting from a hypothetical primitive solution of
%\[
%Ax^p+By^p+Cz^p=0,
%\]
%we attach a Frey curve $E/K$. For sufficiently large $p$, its mod-$p$
%representation is irreducible and, by Conjecture~\ref{conj 1}, arises from
%a weight $2$ modular form of controlled level. Level lowering then gives a
%finite collection of possible modular forms. Using
%Conjecture~\ref{conj 2}, together with the analysis of the corresponding
%elliptic curves, we obtain an elliptic curve $E'/K$ satisfying
%\[
%\bar{\rho}_{E,p}\simeq\bar{\rho}_{E',p}.
%\]
%The crucial final step is then to use the full $2$-torsion of $E'$ and its
%good reduction outside $S_K'$ to associate to $E'$ an
%$S_K'$-unit solution
%\[
%\lambda+\mu=1,
%\qquad
%\lambda,\mu\in\mathcal{O}_{S_K'}^\times.
%\]
%The hypothesis in Theorem~\ref{main result1 for Ax^p+By^p+Cz^p=0}
%provides a prime $\mathfrak P\in S_K$ at which
%\[
%\max\bigl\{|v_{\mathfrak P}(\lambda)|,
%|v_{\mathfrak P}(\mu)|\bigr\}
%\leq 4v_{\mathfrak P}(2).
%\]
%On the other hand, the construction of the auxiliary elliptic curve
%gives
%\[
%v_{\mathfrak P}(j_{E'})<0.
%\]
%The Legendre expression
%\[
%j_{E'}
%=2^8\frac{(1-\lambda\mu)^3}{(\lambda\mu)^2}
%\]
%then yields a contradiction. Thus no such primitive solution can exist
%for sufficiently large $p$.
\subsection{Imaginary quadratic field case}
Throughout this subsection, we assume that $K$ is an imaginary quadratic field.
We identify squarefree integers $d\geq2$ for which Theorem~\ref{main result1 for Ax^p+By^p+Cz^p=0} holds over $K=\Q(\sqrt{-d})$ and calculate their density.
We now state the main result of this subsection.
\begin{thm}
	\label{imaginary quadratic generalized Fermat}
 Let
	$d\geq5$ be a squarefree integer satisfying
	$
	-d\not\equiv1\pmod8$ and let $K=\mathbb Q(\sqrt{-d})$. 	Let $\mfP \in S_K$ and let $A,B,C\in\{\pm2^r:r\in\mathbb Z_{\geq0}\}$ with $\alpha_{\mathfrak P}
	\beta_{\mathfrak P}
	\gamma_{\mathfrak P}
	\neq0$.
	Assume Conjectures~\ref{conj 1} and~\ref{conj 2}. Then the equation $Ax^p+By^p+Cz^p=0$ has no asymptotic solution in $W_{K, \mfP}$.
In other words, there exists a constant $V=V_{K,A,B,C}>0$ such that the equation
$
Ax^p+By^p+Cz^p=0
$ 
with primes $p>V$ has no non-trivial primitive solution  $(a, b, c)\in  \mcO_K^3$ with $\mfP |abc$.
\end{thm}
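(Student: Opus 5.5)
The plan is to reduce Theorem~\ref{imaginary quadratic generalized Fermat} to Theorem~\ref{main result1 for Ax^p+By^p+Cz^p=0}, or more precisely to Corollary~\ref{irrelevant cor}. Since $A,B,C\in\{\pm 2^r\}$, the set $S_K'$ of primes dividing $2ABC$ is just $S_K$. Because $-d\not\equiv 1\pmod 8$, the prime $2$ is inert or ramified in $K=\Q(\sqrt{-d})$. Hence $S_K=\{\mfP\}$ consists of a single prime, and $\mcO_{S_K'}=\mcO_K[1/2]$. Because $d\geq 5$, we have $\mcO_K^\times=\{\pm1\}$. Therefore the $S_K'$-units are, up to sign, the elements generating ideals of the form $\mfP^k$. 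The hypothesis $\alpha_\mfP\beta_\mfP\gamma_\mfP\neq0$ is assumed in the statement. So it suffices to show that every solution of \eqref{S_K-unit solution} satisfies \eqref{assumption for main result x^p+y^p=2^rz^p}. The cleanest route is to show that every solution is irrelevant, i.e.\ lies in $\{(2,-1),(-1,2),(\tfrac12,\tfrac12)\}$. These three satisfy the bound, since $|v_\mfP(2)|\le 4v_\mfP(2)$.

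To classify the $S$-unit solutions, first suppose $2$ is inert. Then $\mfP=2\mcO_K$, so every $S$-unit has the form $\pm 2^k$ with $k\in\Z$. The equation $\pm2^a\pm2^b=1$ over $\Q$ has only the solutions $(2,-1)$, $(-1,2)$ and $(\tfrac12,\tfrac12)$; after clearing denominators this is elementary.

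Now suppose $2$ is ramified, so $\mfP^2=2\mcO_K$. If $\mfP$ is principal, say $\mfP=\pi\mcO_K$, then $\pi\bar\pi$ generates $\mfP\bar{\mfP}=\mfP^2=(2)$, so $N(\pi)=\pi\bar\pi=2$ (the norm is positive in an imaginary quadratic field). But $x^2+dy^2=2$, or $x^2+xy+\tfrac{d+1}{4}y^2=2$ in the case $-d\equiv1\pmod 4$, has no solution for $d\geq5$. (In the latter case $-d\equiv 5\pmod 8$, which forces $d\geq 3$; for $d=3$ the prime $2$ is inert anyway.) Hence $\mfP$ is nonprincipal of order $2$ in the class group. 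Consequently any $S$-unit generates $\mfP^{2m}=(2^m)$, so it equals $\pm 2^m$, because $\mcO_K^\times=\{\pm1\}$. We are thus reduced to the same rational equation $\pm2^a\pm2^b=1$, and again only the irrelevant solutions occur.

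To conclude, apply Corollary~\ref{irrelevant cor}, valid under Conjectures~\ref{conj 1} and~\ref{conj 2}. The main point needing care is the ramified case, specifically showing that $\mfP$ is nonprincipal and that $\mcO_K^\times=\{\pm1\}$. Both rely on $d\geq5$: the cases $d=1,2$ would allow extra units, or make $\mfP$ principal (for example $\sqrt{-2}$ generates $\mfP$ when $d=2$), and these are exactly the excluded cases.
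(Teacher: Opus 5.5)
Your proposal is correct and follows essentially the same route as the paper. Both arguments identify $S_K'=S_K=\{\mfP\}$, use $\mcO_K^\times=\{\pm1\}$, and in the ramified case use non-principality of $\mfP$ (via $a^2+db^2\neq2$) to show every $S_K'$-unit is $\pm2^k$; they then reduce to $\pm2^r\pm2^s=1$, whose only solutions are the irrelevant ones, and conclude from Theorem~\ref{main result1 for Ax^p+By^p+Cz^p=0} (equivalently Corollary~\ref{irrelevant cor}). One harmless blemish: in the ramified case $-d\equiv2,3\pmod4$, so $\mcO_K=\Z[\sqrt{-d}]$ and your parenthetical about the form $x^2+xy+\tfrac{d+1}{4}y^2$ never arises.
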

We now define the relative density.
Let
\[
\N^{\mathrm{sf}}:=\{d\in\Z_{\geq2}:d\text{ is squarefree}\}.
\]
The relative density of $S \subseteq \N^{\text{sf}}$ is defined as follows.
\begin{dfn}
	\label{relative density}
	For $S \subseteq \N^{\text{sf}}$, the relative density of $S$ is defined by 
	\[
	\delta_{\text{rel}}(S):=
	\lim_{x\to\infty}
	\frac{\#\{d\in S:d\leq x\}}
	{\#\{d\in\N^{\mathrm{sf}}:d\leq x\}},
	\]
	if the limit exists.
\end{dfn}
Next, we compute the density for the above result.
\begin{prop}
	\label{thm for density}
	Let $U:=\{d \in \N^{\text{sf}} :  d\geq5$ and
	$-d\not\equiv1\pmod8\}$. Then $ \delta_{\text{rel}}(U)=  \frac{5}{6}$. 
\end{prop}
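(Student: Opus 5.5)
We need the relative density of $U$ among squarefree integers $d\ge 2$. Removing the finitely many $d<5$ does not change any limit, so it suffices to compute the relative density of squarefree $d$ with $d\not\equiv 7 \pmod 8$. Note that $-d\equiv 1\pmod 8$ is equivalent to $d\equiv 7\pmod 8$. The plan is to show that squarefree $d\equiv 7\pmod 8$ have relative density $1/6$, so $\delta_{\mathrm{rel}}(U)=1-\tfrac16=\tfrac56$.

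The key input is the standard count of squarefree integers in a residue class. For $a$ coprime to $m$,
\[
\#\{d\le x:\ d \text{ squarefree},\ d\equiv a \pmod m\}\sim \frac{6}{\pi^2}\cdot\frac{1}{m}\prod_{p\mid m}\Bigl(1-\frac1{p^2}\Bigr)^{-1}x .
\]
I would prove this directly with Möbius inversion. Write $\mathbf 1_{\mathrm{sf}}(d)=\sum_{k^2\mid d}\mu(k)$. Only $k$ coprime to $m$ contribute, since $d$ is coprime to $m$. For such $k$, the conditions $k^2\mid d$ and $d\equiv a\pmod m$ define a single class modulo $k^2m$, by the Chinese remainder theorem. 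Hence the count equals $\sum_{(k,m)=1}\mu(k)\bigl(x/(k^2m)+O(1)\bigr)$. Truncating at $k\le\sqrt x$ gives an error of $O(\sqrt x)$, and the main term is $\frac{x}{m}\sum_{(k,m)=1}\mu(k)k^{-2}=\frac{x}{m}\prod_{p\nmid m}(1-p^{-2})$.

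Now take $m=8$ and $a=7$. The count of squarefree $d\equiv 7\pmod 8$ up to $x$ is
\[
\frac{x}{8}\cdot\frac{6}{\pi^2}\cdot\frac43=\frac{x}{\pi^2}.
\]
The total count of squarefree $d\le x$ is $\frac{6}{\pi^2}x$. The ratio is therefore $1/6$, and the claim follows.

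As a sanity check: odd squarefree integers have density $4/\pi^2$, that is $2/3$ of all squarefree integers, and these split equally among the four odd classes mod $8$, giving $1/6$ each. Even squarefree $d$ (density $1/3$) are always in $U$, so $\delta_{\mathrm{rel}}(U)=\tfrac13+3\cdot\tfrac16=\tfrac56$.

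The only step requiring care is the error-term control in the Möbius sum. This is routine, so I do not expect a real obstacle.
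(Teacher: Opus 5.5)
Your proposal is correct and follows the paper's argument: pass to the complement $d\equiv 7 \pmod 8$, then compare the count $x/\pi^2$ of such squarefree $d\le x$ with the total $6x/\pi^2$ to get $1-\tfrac16=\tfrac56$. The only difference is that you derive the residue-class asymptotic yourself by M\"obius inversion, and your error control there is fine; the paper instead cites it as the case $\gcd(r,m)=1$ of Freitas--Siksek's Theorem~10, so your version is simply more self-contained.
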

The proposition shows that the set of squarefree integers $d\geq2$ for which Theorem~\ref{main result1 for Ax^p+By^p+Cz^p=0} holds over $K=\Q(\sqrt{-d})$ has relative density at least $\frac{5}{6}$.
We provide the proof of Theorem~\ref{imaginary quadratic generalized Fermat} and Proposition~\ref{thm for density} in \S\ref{section for loc criteria}.
\subsection{Structure of the article}
This article is organized as follows. In \S\ref{steps to prove main results}, we prove Theorem~\ref{main result1 for Ax^p+By^p+Cz^p=0}. In \S\ref{section for loc criteria}, we prove Theorem~\ref{imaginary quadratic generalized Fermat} and Proposition~\ref{thm for density}.
%Finally, in \S\ref{Proof of the thm for density}, we prove Theorem~\ref{thm for density}.
	
\section{Preliminaries}
\label{section for preliminary}

\subsection{Notation}

Throughout this article, we use the following notation.
\begin{itemize}
	\item For an elliptic curve $E/K$, we denote by $\Delta_E$ and
	$j_E$ the discriminant and the $j$-invariant of $E$, respectively.
	
	\item For a number field $K$, let
	$
	G_K:=\Gal(\overline{K}/K)$ 
	denote the absolute Galois group of $K$.
	
	\item For a prime ideal $\mfP$ of $\mcO_K$, let $I_{\mfP}$ denote
	the inertia subgroup of $G_K$ at $\mfP$.
	
	\item For an elliptic curve $E/K$ and a rational prime $p$, let
	\[
	\bar{\rho}_{E,p}:G_K
	\longrightarrow
	\Aut(E[p])
	\simeq
	\GL_2(\F_p)
	\]
	denote the mod-$p$ Galois representation arising from the natural
	action of $G_K$ on the $p$-torsion subgroup $E[p]$ of $E$.
\end{itemize}

\subsection{Eigenforms for \texorpdfstring{$\GL_2$}{GL2} over number fields}
\label{subsection eigenforms over number fields}

Let $K$ be a number field of signature $(r,s)$ and let $\mcO_K$
denote its ring of integers. In this subsection, we recall the
notions of complex and mod-$p$ eigenforms over $K$; we refer to
\cite[\S2]{SS18} for further details.

Let $(r,s)$ denote the signature of $K$, where $r$ is the number of
real embeddings of $K$ and $s$ is the number of pairs of complex
conjugate embeddings. Thus, $
[K:\Q]=r+2s.$ Let $\widehat{\mcO}_K$ denote the profinite completion of $\mcO_K$,
and let $\A_K$ and $\A_K^\infty$ denote the ring of ad\`eles and the
ring of finite ad\`eles of $K$, respectively. We write
$\mbH_2^\pm$ for the union of the upper and lower half-planes and
$\mbH_3$ for hyperbolic $3$-space. Set
\[
X:=(\mbH_2^\pm)^r\times\mbH_3^s.
\]
The group $\GL_2(K)$ acts on $X$ through the natural embedding
\[
\GL_2(K)
\hookrightarrow
\GL_2(K)\otimes_{\Q}\R
\simeq
\GL_2(\R)^r\times\GL_2(\C)^s.
\]

For a non-zero integral ideal $\mfn$ of $\mcO_K$, define
\[
U_0(\mfn)
:=
\left\{
\gamma\in\GL_2(\widehat{\mcO}_K):
\gamma\equiv
\bsmat{\ast}{0}{\ast}{\ast}
\pmod{\mfn}
\right\},
\]
and
\[
Y_0(\mfn)
:=
\GL_2(K)\backslash
\left(
\bigl(\GL_2(\A_K^\infty)/U_0(\mfn)\bigr)\times X
\right).
\]

For $i\in\{0,\ldots,2r+3s\}$, consider the $i$-th cohomology group
$
H^i(Y_0(\mfn),\C).
$
For every prime ideal $\mfq$ of $\mcO_K$ satisfying
$\mfq\nmid\mfn$, let $T_{\mfq}$ denote the corresponding Hecke
operator acting on $H^i(Y_0(\mfn),\C)$. These Hecke operators commute
with one another. We denote by
$
\mathbb T_{\C}^{i}(\mfn)
$
the commutative $\Z$-algebra generated by the operators $T_{\mfq}$,
for $\mfq\nmid\mfn$, inside
$
\text{End}_{\C}\bigl(H^i(Y_0(\mfn),\C)\bigr).
$

\begin{dfn}
	\label{definition complex eigenform}
	Let $i\in\{0,\ldots,2r+3s\}$ and let $\mfn$ be a non-zero
	integral ideal of $\mcO_K$. A \emph{complex eigenform of weight
		$2$, degree $i$, and level $\mfn$ over $K$} is a ring
	homomorphism
	$
	f:\mathbb T_{\C}^{i}(\mfn)\longrightarrow\C.
	$
\end{dfn}

For a complex eigenform $f$, the Hecke eigenvalues $f(T_{\mfq})$ are
algebraic integers and generate a number field, which we denote by
$\Q_f$ and call the \emph{Hecke field} (or \emph{coefficient field})
of $f$.
A complex eigenform $f$ of level $\mfn$ is called \emph{trivial} if
$
f(T_{\mfq})=N_{K/\Q}(\mfq)+1
$
for every prime ideal $\mfq\nmid\mfn$.
Two complex eigenforms $f$ and $g$, possibly of different degrees
and levels, are said to be \emph{equivalent} if
$
f(T_{\mfq})=g(T_{\mfq})
$
for all but finitely many prime ideals $\mfq$.
Finally, a complex eigenform $f$ of level $\mfn$ is called
\emph{new} if it is not equivalent to any complex eigenform whose
level is a proper divisor of $\mfn$.

\subsection{Lifting mod-\texorpdfstring{$p$}{p} eigenforms}
\label{subsection lifting mod p eigenforms}
Let $\mfn$ be a non-zero integral ideal of $\mcO_K$, and let $p$ be
a rational prime which is unramified in $K$ and coprime to $\mfn$.
For $i\in\{0,\ldots,2r+3s\}$, the cohomology group
$
H^i(Y_0(\mfn),\overline{\F}_p)
$
is likewise equipped with Hecke operators, which we continue to
denote by $T_{\mfq}$, where $\mfq$ ranges over the prime ideals of
$\mcO_K$ satisfying
$
\mfq\nmid p\mfn.
$
These Hecke operators commute with one another and generate a
commutative Hecke algebra, which we denote by
$
\mathbb T_{\overline{\F}_p}^{i}(\mfn).
$
\begin{dfn}
	\label{definition mod p eigenform}
	Let $i\in\{0,\ldots,2r+3s\}$. A \emph{mod-$p$ eigenform of
		weight $2$, degree $i$, and level $\mfn$ over $K$} is a ring
	homomorphism
	$
	\theta:
	\mathbb T_{\overline{\F}_p}^{i}(\mfn)
	\longrightarrow
	\overline{\F}_p.
	$
\end{dfn}

We say that a mod-$p$ eigenform $\theta$ of degree $i$ and level
$\mfn$ \emph{lifts to a complex eigenform} if there exist a complex
eigenform $f$ of degree $i$ and level $\mfn$, together with a prime
ideal $\mfp$ of the Hecke field $\Q_f$ lying above $p$, such that
\[
\theta(T_{\mfq})
\equiv
f(T_{\mfq})
\pmod{\mfp}
\]
for every prime ideal $\mfq$ of $\mcO_K$ satisfying
$
\mfq\nmid p\mfn.
$

The following result guarantees that, for a fixed level, every
mod-$p$ eigenform lifts to a complex one once $p$ is
sufficiently large.

\begin{prop}[\cite{SS18}, Proposition~2.1]
	\label{lifting to complex one}
	Let $\mfn$ be a non-zero integral ideal of $\mcO_K$. There exists
	a constant $B_{\mfn}>0$, depending only on $\mfn$, such that, for
	every prime
	$
	p>B_{\mfn},
	$
	every mod-$p$ eigenform of level $\mfn$ lifts to a complex
	eigenform of the same level.
\end{prop}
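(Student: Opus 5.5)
The plan is to reduce the lifting problem to the vanishing of $p$-torsion in the integral cohomology of $Y_0(\mfn)$, and then to apply the Deligne--Serre lifting lemma to the Hecke algebra acting on a $\Z_p$-lattice.

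First, I will control the integral cohomology. The space $Y_0(\mfn)$ is a finite disjoint union of locally symmetric spaces $\Gamma_j\backslash X$, where the $\Gamma_j$ are congruence subgroups of $\GL_2(K)$. Each of these is homotopy equivalent to its Borel--Serre compactification, which is a compact manifold with corners, so that $H^i(Y_0(\mfn),\Z)$ is a finitely generated abelian group for every $i$. (Where the $\Gamma_j$ have torsion, I would either pass to a neat finite-index subgroup or note that the relevant orbifold torsion only involves primes bounded in terms of $K$.) Hence the torsion subgroups $H^i(Y_0(\mfn),\Z)_{\mathrm{tors}}$ are finite for $0\le i\le 2r+3s+1$. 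I will take $B_{\mfn}$ larger than every prime dividing the orders of these finitely many finite groups, and also larger than every prime dividing $N_{K/\Q}(\mfn)$, the discriminant of $K$, and the orders of the finite stabilisers. This $B_{\mfn}$ depends only on $\mfn$ (and $K$), but it is ineffective.

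Next, for $p>B_{\mfn}$ I will use the universal coefficient theorem:
\[
H^i(Y_0(\mfn),\F_p)\simeq \bigl(H^i(Y_0(\mfn),\Z)\otimes\F_p\bigr)\oplus \mathrm{Tor}\bigl(H^{i+1}(Y_0(\mfn),\Z),\F_p\bigr).
\]
The Tor term vanishes because $H^{i+1}$ has no $p$-torsion. So $L:=H^i(Y_0(\mfn),\Z_p)$ is a free $\Z_p$-module of finite rank, and $L\otimes\overline{\F}_p\simeq H^i(Y_0(\mfn),\overline{\F}_p)$. Since the Hecke operators $T_{\mfq}$ for $\mfq\nmid p\mfn$ are defined integrally by correspondences, this isomorphism is Hecke-equivariant. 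Moreover, $L\otimes_{\Z_p}\overline{\Q}_p\simeq H^i(Y_0(\mfn),\C)$ after fixing an isomorphism $\overline{\Q}_p\simeq\C$. It follows that the image $\mathbb T$ of the abstract Hecke algebra in $\End(L)$ is a finite free $\Z_p$-algebra. Both $\mathbb T_{\overline{\F}_p}^i(\mfn)$ and $\mathbb T_{\C}^i(\mfn)$ (restricted to the operators with $\mfq\nmid p$) are then quotients of $\mathbb T\otimes\overline{\F}_p$ and $\mathbb T\otimes\C$ respectively.

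Finally, I will apply Deligne--Serre. A mod-$p$ eigenform $\theta$ gives a maximal ideal $\mathfrak m$ of $\mathbb T$ with residue field embedded in $\overline{\F}_p$. Since $\mathbb T$ is finite and torsion-free over $\Z_p$, $\mathfrak m$ contains a minimal prime $\wp$, and $\mathbb T/\wp$ embeds into $\overline{\Q}_p$. This yields a ring homomorphism $f:\mathbb T\to\overline{\Q}_p\simeq\C$ whose reduction is $\theta$, and $f$ factors through $\mathbb T^i_{\C}(\mfn)$ as a complex eigenform of degree $i$ and level $\mfn$. The values $f(T_\mfq)$ are algebraic integers, so taking $\mfp$ to be the prime of $\Q_f$ induced by the embedding into $\overline{\Q}_p$ gives $\theta(T_\mfq)\equiv f(T_\mfq)\pmod{\mfp}$ for all $\mfq\nmid p\mfn$.

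The main obstacle is the first step. It requires establishing the finiteness of torsion, with appropriate care for non-neat levels, and checking that the Hecke correspondences are compatible with the integral structure, so that the comparison of mod-$p$ and $p$-adic cohomology is Hecke-equivariant. Once this is in place, the rest is formal commutative algebra.
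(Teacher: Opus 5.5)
Your proposal is correct, and it is essentially the argument behind \cite{SS18}, Proposition~2.1, which this paper quotes without proof. That argument runs through finite generation of $H^{\ast}(Y_0(\mfn),\Z)$, a choice of $B_{\mfn}$ beyond the primes dividing its torsion (and those dividing $\mfn$ and the discriminant), the universal coefficient identification $H^i(Y_0(\mfn),\overline{\F}_p)\simeq H^i(Y_0(\mfn),\Z_p)\otimes\overline{\F}_p$ as Hecke modules, and finally the Deligne--Serre lifting lemma. One detail to tidy: $\mathbb{T}^i_{\C}(\mfn)$ also contains $T_{\mfq}$ for $\mfq\mid p$, so your $f$, which is defined only on the operators with $\mfq\nmid p\mfn$, must be extended to these, e.g.\ by taking a common eigenvector of these commuting operators on the nonzero generalized $f$-eigenspace in $H^i(Y_0(\mfn),\C)$.
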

\subsection{Mod-\texorpdfstring{$p$}{p} Galois representations and modularity conjectures}
\label{subsection mod p Galois representations}

Let $K$ be a number field. Suppose that $K$ admits a real embedding
$
\sigma:K\hookrightarrow\mathbb R
$. For every extension
$
\widetilde{\sigma}:\overline{K}\hookrightarrow\mathbb C
$
of $\sigma$, the element
$
\widetilde{\sigma}^{-1}\circ\iota\circ\widetilde{\sigma}
\in G_K,
$
where $\iota$ denotes the usual complex conjugation on $\mathbb C$,
is called a \emph{complex conjugation} in $G_K$. A representation
$
\bar{\rho}:G_K\longrightarrow\GL_2(\overline{\F}_p)
$
is said to be \emph{odd} if
$
\det\bigl(\bar{\rho}(\mfc)\bigr)=-1
$
for every complex conjugation $\mfc\in G_K$. If $K$ has no real
embeddings, then we regard $\bar{\rho}$ as automatically odd.

We now recall a special case of Serre's modularity conjecture over
number fields concerning the modularity of two-dimensional mod-$p$
Galois representations. This conjecture will play an important role
in the proofs of our main results.

\begin{conj}[\cite{SS18}, Conjecture~3.1]
	\label{modularity conj}
	\label{conj 1}
	Let
	$
	\bar{\rho}:G_K\longrightarrow\GL_2(\overline{\F}_p)
	$
	be an odd, irreducible, and continuous representation with Serre
	conductor $\mfn$, i.e., $\mfn$ is the prime-to-$p$ part of its
	Artin conductor, and with trivial character, i.e., the
	prime-to-$p$ part of $\det(\bar{\rho})$ is trivial. Assume that
	$p$ is unramified in $K$ and that, for every prime $\mfp\mid p$,
	the restriction
	$
	\bar{\rho}|_{G_{K_{\mfp}}}
	$
	arises from a finite flat group scheme over $\mcO_{K_{\mfp}}$.
	Then there exists a weight-$2$ mod-$p$ eigenform $\theta$ over
	$K$ of level $\mfn$ such that
	\[
	\Tr\bigl(\bar{\rho}(\Frob_{\mfq})\bigr)
	=
	\theta(T_{\mfq})
	\]
	for every prime ideal $\mfq$ of $\mcO_K$ satisfying
	$\mfq\nmid p\mfn$.
\end{conj}

We next recall an Eichler--Shimura-type conjecture over arbitrary
number fields. Recall that a \emph{fake elliptic curve} over $K$ is
a simple abelian surface $A/K$ whose endomorphism algebra
$
\End_K(A)\otimes_{\mathbb Z}\mathbb Q
$
is an indefinite division quaternion algebra over $\mathbb Q$.
The following conjecture relates rational weight-$2$ eigenforms to
elliptic curves or fake elliptic curves over $K$ and will also be
used in the proofs of our main results.

\begin{conj}[\cite{SS18}, Conjecture~4.1]
	\label{conj 2}
	Let $f$ be a non-trivial new complex eigenform of weight $2$ over
	$K$, of level $\mfn$, and suppose that
	$
	\Q_f=\Q.
	$
	If $K$ has at least one real place, then there exists an elliptic
	curve $E_f/K$ of conductor $\mfn$ such that
	\begin{equation}
		\label{eqn for ES1}
		\#E_f(\mcO_K/\mfq)
		=
		1+N_{K/\Q}(\mfq)-f(T_{\mfq})
	\end{equation}
	for every prime ideal $\mfq\nmid\mfn$.
	
	If $K$ is totally complex, then there exists either an elliptic
	curve $E_f/K$ of conductor $\mfn$ satisfying
	\eqref{eqn for ES1}, or a fake elliptic curve $A_f/K$ of conductor
	$\mfn^2$ such that
	\begin{equation}
		\label{eqn for ES2}
		\#A_f(\mcO_K/\mfq)
		=
		\left(
		1+N_{K/\Q}(\mfq)-f(T_{\mfq})
		\right)^2
	\end{equation}
	for every prime ideal $\mfq\nmid\mfn$.
\end{conj}

\section{Proof of Theorem~\texorpdfstring{\ref{main result1 for Ax^p+By^p+Cz^p=0}}{1.6}}
\label{steps to prove main results}

In this section, we establish the main ingredients required for the
proof of Theorem~\ref{main result1 for Ax^p+By^p+Cz^p=0}. We begin
by attaching a Frey elliptic curve to a putative solution of the
generalized Fermat equation and then study the associated mod-$p$
Galois representation.
\subsection{The Frey curve}
	Let $(a,b,c)\in K^3$ be a non-trivial solution of the generalized
	Fermat equation
%\begin{equation}
%	\label{Fermat solution equation}
$	Aa^p+Bb^p+Cc^p=0.$
%\end{equation}
We associate to $(a,b,c)$ the Frey elliptic curve
\begin{equation}
	\label{Frey curve for x^2=By^p+Cz^p of Type I}
	E:=E_{a,b,c}:
	y^2=x(x-Aa^p)(x+Bb^p).
\end{equation}
A direct computation gives
\begin{align}
	\label{formula for c_4}
	c_4(E)
	&=
	2^4\left(A^2a^{2p}-BCb^pc^p\right) \notag\\
	&=
	2^4\left(B^2b^{2p}-ACa^pc^p\right) \notag\\
	&=
	2^4\left(C^2c^{2p}-ABa^pb^p\right),
%	\label{c4 Frey curve}
\end{align}
and
%\begin{equation}
%	\label{discriminant Frey curve}
$	\Delta_E
	=
	2^4A^2B^2C^2(abc)^{2p}.$
%\end{equation}
Consequently,
%\begin{equation}
%	\label{j invariant Frey curve}
$	j_E
	=
	2^8
	\frac{
		\left(A^2a^{2p}-BCb^pc^p\right)^3
	}{
		A^2B^2C^2(abc)^{2p}
	}.$
%\end{equation}
%We shall study the residual Galois representation
%$
%\bar{\rho}_{E,p}:
%G_K\longrightarrow\GL_2(\F_p)$
%associated to the action of $G_K$ on $E[p]$.

\subsection{Conductor computation}
\label{subsection conductor computation}

For a prime ideal $\mfq$ of $\mcO_K$, let $\Delta_{\mfq}$ denote the
minimal discriminant of $E$ at $\mfq$. We first determine the
reduction of $E$ at primes outside $S_K'$. The following lemma also
gives the properties of $\bar{\rho}_{E,p}$ that will be needed in
the application of Conjecture~\ref{conj 1}.

\begin{lem}
	\label{Serre's conductor}
	Let $\mfP\in S_K$, and let
	$
	(a,b,c)\in W_{K,\mfP}$ 
	be a solution of
	\eqref{Ax^p+By^p+Cz^p=0}. Let $E$ be the Frey curve given by
	\eqref{Frey curve for x^2=By^p+Cz^p of Type I}. Suppose that
	$p\geq5$ is a rational prime such that no prime of $K$ above $p$
	divides $ABC$.
	Then the following statements hold:
	\begin{enumerate}
		\item For every prime $\mfq\notin S_K'$, the curve $E$ is
		minimal and semistable at $\mfq$, and
		\[
		p\mid v_{\mfq}(\Delta_E).
		\]
		
		\item The determinant of $\bar{\rho}_{E,p}$ is the mod-$p$
		cyclotomic character. In particular,
		$\bar{\rho}_{E,p}$ is odd.
		
		\item The representation $\bar{\rho}_{E,p}$ is finite flat at
		every prime $\mfp\mid p$.
		
		\item The Serre conductor $\mfn$ of $\bar{\rho}_{E,p}$ is
		supported on $S_K'$ and belongs to a finite set of ideals
		depending only on $K$, $A$, $B$, and $C$.
	\end{enumerate}
\end{lem}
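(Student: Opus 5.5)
We follow the standard Frey-curve analysis of Freitas--Siksek and \c{S}eng\"un--Siksek, treating the four parts in order. The only local input is that $E$ is given by the model~\eqref{Frey curve for x^2=By^p+Cz^p of Type I}, with
\[
\Delta_E=2^4A^2B^2C^2(abc)^{2p}
\]
and $c_4$ as in~\eqref{formula for c_4}, together with primitivity of $(a,b,c)$.

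For part (1), fix $\mfq\notin S_K'$, so $\mfq\nmid 2ABC$. If $\mfq\nmid abc$, then $v_\mfq(\Delta_E)=0$ and $E$ has good reduction at $\mfq$. If $\mfq\mid abc$, the argument of Remark~\ref{remark for W_K} applies: since $v_\mfq(ABC)=0$, primitivity forces $\mfq$ to divide exactly one of $a,b,c$. Each of the three expressions for $c_4$ in~\eqref{formula for c_4} contains one term free of the relevant variable. For instance, if $\mfq\mid a$, the second expression gives
\[
c_4=2^4\bigl(B^2b^{2p}-ACa^pc^p\bigr)\equiv 2^4B^2b^{2p}\not\equiv0 \pmod{\mfq}.
\]
Hence $v_\mfq(c_4)=0$, so the model is minimal at $\mfq$ and the reduction is multiplicative. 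Moreover $v_\mfq(\Delta_E)=2p\,v_\mfq(abc)$, which is divisible by $p$.

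Part (2) is the Weil pairing: $\det\bar\rho_{E,p}=\chi_p$, and $\chi_p(\mfc)=-1$ for every complex conjugation $\mfc$. If $K$ has no real place, oddness holds by convention.

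For part (3), let $\mfp\mid p$. Since $p\geq5$ we have $\mfp\notin S_K$, and since no prime above $p$ divides $ABC$ by hypothesis, $\mfp\notin S_K'$. By part (1), $E$ has good or multiplicative reduction at $\mfp$ with $p\mid v_\mfp(\Delta_\mfp)$. In the good reduction case, $E[p]$ extends to the finite flat group scheme $\mathcal{E}[p]$ over $\mcO_{K_\mfp}$. In the multiplicative case, the Tate curve description shows that $E[p]$ is finite flat over $\mcO_{K_\mfp}$ when $p\mid v_\mfp(\Delta)$; this is the standard argument of Serre and Kraus, as in \cite{FS15,SS18}. Unramifiedness of $p$ in $K$ holds for $p\gg0$ and is needed only for Conjecture~\ref{conj 1}, not for this lemma.

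Part (4) is the main point. For $\mfq\notin S_K'$ with $\mfq\nmid p$, the curve is semistable with $p\mid v_\mfq(\Delta_\mfq)$, so the Tate-curve criterion makes $\bar\rho_{E,p}$ unramified at $\mfq$ and $\mfq\nmid\mfn$. Primes above $p$ are excluded from the Serre conductor by definition. Hence $\mfn$ is supported on $S_K'$, and $v_\mfq(\mfn)\leq v_\mfq(\mathcal N_E)$ for each $\mfq\in S_K'$.

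It remains to bound $v_\mfq(\mathcal N_E)$ uniformly. For odd $\mfq\in S_K'$, i.e.\ $\mfq\mid ABC$ with $\mfq\nmid 2$, the conductor exponent is at most $2$. For $\mfq\mid 2$, the Brumer--Kramer/Lockhart--Rosen--Silverman bound gives
\[
v_\mfq(\mathcal N_E)\leq 2+6v_\mfq(2).
\]
These bounds depend only on $K$ and on the finite set $S_K'$, which is itself determined by $K,A,B,C$. Therefore $\mfn$ lies in a finite set depending only on $K,A,B,C$.

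I expect the only delicate step to be (3): checking finite flatness at $\mfp\mid p$ in the multiplicative case while ensuring that $\mfp\notin S_K'$, which is exactly what the hypothesis that no prime above $p$ divides $ABC$ guarantees.
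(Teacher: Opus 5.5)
Your proposal is correct and follows the paper's proof step for step: the same $c_4$ argument for minimality and multiplicative reduction, the Weil pairing for the determinant, the Tate-curve finite-flatness criterion at primes above $p$, and support on $S_K'$ together with uniform conductor bounds for finiteness.

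One inaccuracy in (4). You assert that $v_\mfq(\mathfrak N_E)\leq 2$ for every odd $\mfq\in S_K'$, but as a general fact about elliptic curves this fails at primes above $3$, where wild ramification can occur. The paper uses the bound $2+3v_\mfq(3)$ at such primes, and that bound still gives finiteness.

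Alternatively, you can keep the bound $2$ for this Frey curve, but it needs justification. Since $E[2]\subseteq E(K)$, inertia in the potentially good case acts through a subgroup of $\{\pm1\}$, so there is no wild part. Either way, the conclusion of (4) is unaffected.
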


\begin{proof}
	The argument is similar to that of \cite[Lemma~5.4]{SS18}.
	
	Let $\mfq\notin S_K'$. Since
	$
	\Delta_E
	=
	2^4A^2B^2C^2(abc)^{2p},$
	if $\mfq\nmid\Delta_E$, then $E$ has good reduction at $\mfq$.
	In this case,
	$
	v_{\mfq}(\Delta_E)=0,$
	and hence
	$
	p\mid v_{\mfq}(\Delta_E).$
	Suppose now that $\mfq\mid\Delta_E$. Since
	$\mfq\notin S_K'$, we have
	$
	\mfq\nmid2ABC.
	$
	Therefore $\mfq\mid abc$. Since $(a,b,c)$ is primitive, $\mfq$
	cannot divide all three of $a,b,c$. If $\mfq$ divided two of them,
	the equation $Aa^p+Bb^p+Cc^p=0$, together with $\mfq\nmid ABC$,
	would force $\mfq$ to divide the third. Hence $\mfq$ divides
	exactly one of $a,b,c$.
	
	We claim that
	$
	v_{\mfq}(c_4(E))=0.$
	Indeed, according as $\mfq$ divides $a$, $b$, or $c$, we use,
	respectively, one of the identities in \eqref{formula for c_4}.
	Since $\mfq\nmid2ABC$ and $\mfq$ divides exactly one of
	$a,b,c$, it follows in each case that
	$
	\mfq\nmid c_4(E).$
	Hence the given model is minimal at $\mfq$ and $E$ has
	multiplicative reduction there. Moreover,
	$
	v_{\mfq}(\Delta_E)
	=
	2p\,v_{\mfq}(abc),$ 
	so that
	$
	p\mid v_{\mfq}(\Delta_E).$
	This proves (1).
	
	The assertion concerning the determinant follows from the Weil
	pairing on $E[p]$. More precisely,
	$
	\det(\bar{\rho}_{E,p})=\chi_p,$
	where $\chi_p$ denotes the mod-$p$ cyclotomic character.
	Consequently, $\bar{\rho}_{E,p}$ is odd. This proves (2).
	
	For (3), let $\mfp\mid p$. By hypothesis, $\mfp\nmid ABC$; since
	$p\geq5$, we have $\mfp\notin S_K'$. By (1), $E$ is semistable
	at $\mfp$.
	Moreover,
	$
	p\mid v_{\mfp}(\Delta_E)
	$
	whenever $E$ has multiplicative reduction at $\mfp$. It follows
	from the finite-flat criterion used in \cite[Lemma~5.4]{SS18} that
	$\bar{\rho}_{E,p}$ is finite flat at every prime $\mfp\mid p$.
	
	It remains to determine the support of the Serre conductor.
	Let $\mfq\notin S_K'$. If $E$ has good reduction at $\mfq$, then
	the N\'eron--Ogg--Shafarevich criterion implies that
	$\bar{\rho}_{E,p}$ is unramified at $\mfq$. If $E$ has
	multiplicative reduction at $\mfq$, then
	$
	p\mid v_{\mfq}(\Delta_E),
	$
	and the standard description of the mod-$p$ representation of a
	Tate curve again shows that $\bar{\rho}_{E,p}$ is unramified at
	$\mfq$. Therefore
	$
	\mfq\nmid\mfn
	\qquad
	\text{for every }\mfq\notin S_K'.
	$
	Thus $\mfn$ is supported on $S_K'$.
	
	Finally, the Serre conductor $\mfn$ divides the conductor
	$\mathfrak N_E$ of $E$. By the standard bounds for the local
	conductor exponent of an elliptic curve
	(cf.~\cite[Theorem~IV.10.4]{S94}), we have
	\[
	v_{\mfq}(\mfn)
	\leq
	v_{\mfq}(\mathfrak N_E)
	\leq
	\begin{cases}
		2+6v_{\mfq}(2), & \mfq\mid2,\\[2mm]
		2+3v_{\mfq}(3), & \mfq\nmid2.
	\end{cases}
	\]
	Since $S_K'$ is finite and depends only on $K,A,B,C$, there are
	only finitely many possibilities for $\mfn$, depending only on
	$K,A,B,C$. This proves (4).
\end{proof}

\subsection{Image of inertia}
\label{subsection image of inertia}

In this subsection, we study the image of inertia under the residual
Galois representation
$
\bar{\rho}_{E,p}(I_{\mfq}),
$
which will be useful for determining the reduction type of the Frey
curve at primes $\mfP\in S_K$. We begin by recalling the following
standard criterion.

\begin{lem}[\cite{SS18}, Lemma~5.1]
	\label{criteria for potentially multiplicative reduction}
	Let $E/K$ be an elliptic curve, let $p\geq5$ be a rational prime,
	and let $\mfq$ be a prime of $K$ such that $\mfq\nmid p$. Then:
	\begin{enumerate}
		\item If $v_{\mfq}(j_E)\geq0$, then
		$
		\#\bar{\rho}_{E,p}(I_{\mfq})\mid24.
		$
		
		\item Suppose that $v_{\mfq}(j_E)<0$. Then:
		\begin{enumerate}
			\item if $p\nmid v_{\mfq}(j_E)$, then
			$
			\#\bar{\rho}_{E,p}(I_{\mfq})=p
			\quad\text{or}\quad 2p;
			$
			\item if $p\mid v_{\mfq}(j_E)$, then
			$
			\#\bar{\rho}_{E,p}(I_{\mfq})=1
			\quad\text{or}\quad 2.
			$
		\end{enumerate}
	\end{enumerate}
\end{lem}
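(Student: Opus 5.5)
This is a standard local fact (Lemma 5.1 of \cite{SS18}), and I would prove it by passing to the completion $K_{\mfq}$ and splitting into the potentially good and potentially multiplicative cases. Since $\mfq\nmid p$, the restriction of $\bar{\rho}_{E,p}$ to $I_{\mfq}$ depends only on $E/K_{\mfq}$. Let $\ell$ be the residue characteristic of $\mfq$.

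\emph{Case $v_{\mfq}(j_E)\geq0$.} Here $E$ has potentially good reduction at $\mfq$. By Serre--Tate, there is a minimal finite extension $L/K_{\mfq}^{\mathrm{ur}}$ over which $E$ acquires good reduction. The group $\Gal(L/K_{\mfq}^{\mathrm{ur}})$ embeds into $\Aut(\tilde E)$ of the reduced curve, so its order divides $24$. This uses the classical bound: $\#\Aut\tilde E\in\{2,4,6,12,24\}$, and $\Phi=\Gal(L/K^{\mathrm{ur}}_{\mfq})$ injects into it. By Néron--Ogg--Shafarevich, $I_L$ acts trivially on $E[p]$ because $p\neq\ell$. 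Hence $\bar{\rho}_{E,p}(I_{\mfq})$ is a quotient of $\Phi$, and its order divides $24$.

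\emph{Case $v_{\mfq}(j_E)<0$.} Here $E$ is a quadratic twist, by some character $\chi$ of order at most $2$ on $G_{K_{\mfq}}$, of the Tate curve $E_q$ with $v_{\mfq}(q)=-v_{\mfq}(j_E)$. The Tate parametrization gives an exact sequence
\[
0\to\mu_p\to E_q[p]\to\Z/p\Z\to0,
\]
with extension class the Kummer class of $q$. On $I_{\mfq}$ the cyclotomic character is trivial because $\mfq\nmid p$. So $\bar{\rho}_{E_q,p}|_{I_{\mfq}}$ is unipotent, of the form $\bsmat{1}{\ast}{0}{1}$, where $\ast$ is the Kummer character of $q^{1/p}$ restricted to inertia.

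Since $\ell\neq p$, the extension $K_{\mfq}^{\mathrm{ur}}(q^{1/p})$ is tamely ramified. It is nontrivial exactly when $p\nmid v_{\mfq}(q)$: write $q=u\pi^{v}$ with $u$ a unit. Then $u$ is a $p$-th power in $K_{\mfq}^{\mathrm{ur}}$ by Hensel's lemma, because $p\neq\ell$. So the unipotent image has order $p$ if $p\nmid v_{\mfq}(j_E)$, and order $1$ otherwise.

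Twisting by $\chi$ multiplies by $\pm1$ on inertia, so $\bar{\rho}_{E,p}(I_{\mfq})$ is generated by the unipotent group together with possibly $-I$, which is central. Its order is therefore $p$ or $2p$ in case (a), and $1$ or $2$ in case (b).

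The only delicate point is justifying that the twist is at most quadratic and commutes with the unipotent part. This is the standard fact that $E$ and $E_q$ become isomorphic over an extension of degree at most $2$, since they have the same $j$-invariant and $j\neq0,1728$ (here $v_{\mfq}(j_E)<0$).
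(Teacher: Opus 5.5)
Your proof is correct and is the standard argument behind the cited result. The paper gives no proof of its own; it simply quotes \cite[Lemma~5.1]{SS18}, which rests on the same split into two cases. In the potentially good case, inertia acts through the Serre--Tate group $\Phi=\Gal(L/K_{\mfq}^{\mathrm{ur}})\hookrightarrow\Aut(\tilde E)$, whose order divides $24$. In the potentially multiplicative case, one uses an at most quadratic twist of a Tate curve together with Kummer theory for $q^{1/p}$ over $K_{\mfq}^{\mathrm{ur}}$.

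The one step you state tersely is that the twisted image is exactly $U_0$ or $\{\pm I\}\times U_0$, where $U_0=\bar\rho_{E_q,p}(I_{\mfq})$. This follows because $U_0$ has odd order, so the images of squares of inertia elements already fill out $U_0$, and $-I\notin U_0$ since $p$ is odd.
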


For $\mfP\in S_K$ and $A,B,C\in\mcO_K\setminus\{0\}$, recall that 
$
	\alpha_{\mfP}:=8v_{\mfP}(2)+v_{\mfP}(BCA^{-2}),\ 
	\beta_{\mfP}:=8v_{\mfP}(2)+v_{\mfP}(ACB^{-2}),\
	\gamma_{\mfP}:=8v_{\mfP}(2)+v_{\mfP}(ABC^{-2}),$
and define
\begin{equation}
	\label{bound for pot mult red}
	M_{\mfP}(A,B,C)
	:=
	\max\left\{
	v_{\mfP}(ABC),
	|\alpha_{\mfP}|,
	|\beta_{\mfP}|,
	|\gamma_{\mfP}|
	\right\}.
\end{equation}

The following lemma describes the reduction of the Frey curve at
the primes in $S_K$ and, in particular, determines when the image
of inertia has order divisible by $p$.

\begin{lem}
	\label{reduction on T and S}
	Let $\mfP\in S_K$, and let
	$(a,b,c)\in W_{K,\mfP}$ be a non-trivial solution of
	\eqref{Ax^p+By^p+Cz^p=0}. Suppose that
	\[
	p>M_{\mfP}(A,B,C),
	\]
	and let $E$ be the Frey curve given by
	\eqref{Frey curve for x^2=By^p+Cz^p of Type I}. Then $E$ has
	potentially multiplicative reduction at $\mfP$. More precisely:
	\begin{enumerate}
		\item if $\mfP\mid a$, then
		$
		v_{\mfP}(j_E)
		=
		\alpha_{\mfP}
		-2p\,v_{\mfP}(a)<0;
		$
		\item if $\mfP\mid b$, then
		$
		v_{\mfP}(j_E)
		=
		\beta_{\mfP}
		-2p\,v_{\mfP}(b)<0;
		$
		\item if $\mfP\mid c$, then
		$
		v_{\mfP}(j_E)
		=
		\gamma_{\mfP}
		-2p\,v_{\mfP}(c)<0.
		$
	\end{enumerate}
	Furthermore, in the respective cases,
	$
	p\mid\#\bar{\rho}_{E,p}(I_{\mfP})
	$
	if and only if
	$
	\alpha_{\mfP}\neq0,\ 
	\beta_{\mfP}\neq0, \
	\gamma_{\mfP}\neq0,
	$
	respectively.
\end{lem}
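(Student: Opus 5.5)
We compute $v_{\mfP}(j_E)$ directly from the formula
\[
j_E=2^8\frac{(A^2a^{2p}-BCb^pc^p)^3}{A^2B^2C^2(abc)^{2p}},
\]
choosing, in each case, the expression for $c_4(E)/2^4$ from \eqref{formula for c_4} adapted to the variable divisible by $\mfP$. By Remark~\ref{remark for W_K}, since $p>M_{\mfP}(A,B,C)\ge v_{\mfP}(ABC)\ge\max\{v_\mfP(A),v_\mfP(B),v_\mfP(C)\}$, the prime $\mfP$ divides exactly one of $a,b,c$. By symmetry we treat the case $\mfP\mid a$ in detail; the other two cases are identical after permuting $(A,a),(B,b),(C,c)$ and using the matching identity for $c_4$.

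Suppose $\mfP\mid a$, so $v_\mfP(b)=v_\mfP(c)=0$. We use $c_4=2^4(A^2a^{2p}-BCb^pc^p)$. The second term has valuation $v_\mfP(BC)$. The first has valuation $v_\mfP(A^2)+2p\,v_\mfP(a)\ge 2p>v_\mfP(BC)$, since $p>v_\mfP(ABC)$. Hence
\[
v_\mfP(c_4/2^4)=v_\mfP(BC).
\]
The denominator $A^2B^2C^2(abc)^{2p}$ has valuation $2v_\mfP(ABC)+2p\,v_\mfP(a)$. Therefore
\[
v_\mfP(j_E)=8v_\mfP(2)+3v_\mfP(BC)-2v_\mfP(ABC)-2pv_\mfP(a)=\alpha_\mfP-2pv_\mfP(a).
\]
Since $p>|\alpha_\mfP|$ and $v_\mfP(a)\ge1$, this is $<\alpha_\mfP-2|\alpha_\mfP|\le 0$. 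So $v_\mfP(j_E)<0$, and $E$ has potentially multiplicative reduction at $\mfP$. This proves (1), and (2), (3) follow in the same way.

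For the inertia statement, note that $\mfP\mid 2$, so $\mfP\nmid p$ for $p\ge5$; we may assume $p\ge5$, as otherwise we enlarge the bound harmlessly, or note that $M_\mfP\ge 8v_\mfP(2)-\dots$ in typical use. We can then apply Lemma~\ref{criteria for potentially multiplicative reduction}(2). Its clause (a) gives $p\mid\#\bar\rho_{E,p}(I_\mfP)$ iff $p\nmid v_\mfP(j_E)$; otherwise, by clause (b), the order is $1$ or $2$, which is not divisible by $p$. Now $v_\mfP(j_E)\equiv\alpha_\mfP\pmod p$. Since $|\alpha_\mfP|<p$, we get $p\mid\alpha_\mfP$ iff $\alpha_\mfP=0$. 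Hence $p\mid\#\bar\rho_{E,p}(I_\mfP)$ iff $\alpha_\mfP\neq0$, and analogously for $\beta_\mfP,\gamma_\mfP$ in the other cases.

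The argument is a routine valuation count. The only delicate point is ensuring the cancellation-free valuation of $c_4$, i.e.\ that the term $A^2a^{2p}$ strictly dominates in valuation. This rests on $p>v_\mfP(ABC)$, which is built into $M_\mfP$, together with the requirement $p\ge5$ needed to invoke Lemma~\ref{criteria for potentially multiplicative reduction}.
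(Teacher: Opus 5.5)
Your proposal is correct and follows the paper's proof essentially step for step. It uses the same non-cancellation of $c_4$ via $2p\,v_\mfP(a)>v_\mfP(BC)$, the same formula $v_\mfP(j_E)=\alpha_\mfP-2p\,v_\mfP(a)$, and the same reduction modulo $p$ fed into Lemma~\ref{criteria for potentially multiplicative reduction}(2).

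Your hedge about $p\ge5$ can be replaced by a short observation. Since $\alpha_\mfP+\beta_\mfP+\gamma_\mfP=24v_\mfP(2)$, we get $M_\mfP(A,B,C)\ge8$, so $p>M_\mfP(A,B,C)$ already forces $p\ge11$ (and $\mfP\nmid p$ because $p$ is odd).
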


\begin{proof}
	Recall that 	$j_E
	=
	2^8
	\frac{
		\left(A^2a^{2p}-BCb^pc^p\right)^3
	}{
		A^2B^2C^2(abc)^{2p}
	}.$ By Remark~\ref{remark for W_K}, the prime $\mfP$ divides exactly
	one of $a,b,c$. We treat the three possibilities separately.
	
	Suppose first that $\mfP\mid a$. Then $\mfP\nmid bc$. Since
	$
	p>M_{\mfP}(A,B,C)\geq v_{\mfP}(ABC)
	\geq v_{\mfP}(BC),
	$
	and $v_{\mfP}(a)\geq1$, we have
	$
	2p\,v_{\mfP}(a)>v_{\mfP}(BC).
	$
	Therefore, using
	$
	c_4(E)
	=
	2^4(A^2a^{2p}-BCb^pc^p),
	$
	we obtain
	$
	v_{\mfP}
	(A^2a^{2p}-BCb^pc^p)
	=
	v_{\mfP}(BC).
	$
	Hence
	\begin{align*}
		v_{\mfP}(j_E)
		&=
		8v_{\mfP}(2)
		+3v_{\mfP}(BC)
		-2v_{\mfP}(ABC)
		-2p\,v_{\mfP}(a)\\
		&=
		8v_{\mfP}(2)
		+v_{\mfP}(BCA^{-2})
		-2p\,v_{\mfP}(a)\\
		&=
		\alpha_{\mfP}
		-2p\,v_{\mfP}(a).
	\end{align*}
	Since
	$
	p>|\alpha_{\mfP}|
	$
	and $v_{\mfP}(a)\geq1$, it follows that
	$
	v_{\mfP}(j_E)<0.
	$
	Thus $E$ has potentially multiplicative reduction at $\mfP$.
	
	Moreover,
	$
	v_{\mfP}(j_E)
	\equiv\alpha_{\mfP}\pmod p.
	$
	Since $p>|\alpha_{\mfP}|$, we have
	$
	p\mid v_{\mfP}(j_E)
	\quad\Longleftrightarrow\quad
	\alpha_{\mfP}=0.
	$
	Consequently, if $\alpha_{\mfP}\neq0$, then
	$
	p\nmid v_{\mfP}(j_E),
	$
	and Lemma~
	\ref{criteria for potentially multiplicative reduction}(2)
	implies $
	p\mid\#\bar{\rho}_{E,p}(I_{\mfP}).$
	On the other hand, if $\alpha_{\mfP}=0$, then
%	\[
	$v_{\mfP}(j_E)
	=
	-2p\,v_{\mfP}(a),$
%	\]
	so that
%	\[
$	p\mid v_{\mfP}(j_E).$
%	\]
	By Lemma~
	\ref{criteria for potentially multiplicative reduction}(2),
%	\[
$	\#\bar{\rho}_{E,p}(I_{\mfP})\in\{1,2\},$
%	\]
	and therefore
%	\[
$	p\nmid\#\bar{\rho}_{E,p}(I_{\mfP}).$
%	\]

	The cases $\mfP\mid b$ and $\mfP\mid c$ are entirely analogous.
	Indeed, if $\mfP\mid b$, then
	$
	v_{\mfP}(j_E)
	=
	\beta_{\mfP}-2p\,v_{\mfP}(b),
	$
	whereas if $\mfP\mid c$, then
	$
	v_{\mfP}(j_E)
	=
	\gamma_{\mfP}-2p\,v_{\mfP}(c).
	$
	The same argument yields the desired conclusions.
\end{proof}

%The following lemma determines the types of reduction of the Frey curve $E$ at $\mfm$.
%\begin{lem}
%	\label{Type of reduction at q away from 2,p,B x^2=By^p+Cz^p}
%	Let $(a,b,c) \in \mcO_K^3$ be a non-trivial solution to equation~\eqref{Ax^p+By^p+Cz^p=0} with exponent $p \geq5$ with $G_{a,b,c}=\mfm$ for some $\mfm \in H$. If $\mfm\nmid p$, then $\#\bar{\rho}_{E,p}(I_\mfm) |24$ and hence $p \nmid \#\bar{\rho}_{E,p}(I_\mfm).$
%\end{lem}
%
%\begin{proof}
%	Recall that $\Delta_E=2^4A^2B^2C^2(abc)^{2p}$ and $ c_4=2^4(A^2a^{2p}-BCb^pc^p)= 2^4(B^2b^{2p}-ACa^pc^p)=2^4(C^2c^{2p}-ABa^pb^p)$. If $a$, $b$ and $c$ have unequal valuations at $\mfm$, then we have $v_\mfm(c_4)= 2p\min\{v_\mfm(a), v_\mfm(b), v_\mfm(c)\}$ and $v_\mfm(\Delta_E)= 2pv_\mfm(abc)$. Therefore, $v_\mfm(j_E)= 3v_\mfm(c_4)-v_\mfm(\Delta_E)<0$ and $p | v_\mfm(j_E)$. Hence, by Lemma~\ref{criteria for potentially multiplicative reduction}(2), we conlcude that $p \nmid \#\bar{\rho}_{E,p}(I_\mfm).$ In the other case, we get $v_\mfm(j_E) \geq 0$, hence by Lemma~\ref{criteria for potentially multiplicative reduction}(1), we conlcude that $p \nmid \#\bar{\rho}_{E,p}(I_\mfm).$
%%	\begin{itemize}
%%		\item 	If $\mfq \nmid \Delta_E$, then $E$ has good reduction at $\mfq$, hence $v_\mfq(j_E)\geq 0$.
%%		\item   If $\mfq | \Delta_E$, then $\mfq |abc$, hence $\mfq$ divides exactly one of $a$, $b$, $c$. Therefore, $\mfq \nmid c_4$. This gives $p |v_\mfq(j_E)=-2p
%%		v_\mfq(abc)$.
%%	\end{itemize} 
%%	Hence, the proof of the lemma follows.
%\end{proof}

\subsection{Surjectivity of the mod-\texorpdfstring{$p$}{p} Galois representation}
\label{subsection surjectivity mod p}

In order to apply Conjecture~\ref{modularity conj}, we need to establish
the absolute irreducibility of the residual Galois representation
$\bar{\rho}_{E,p}$. In fact, for sufficiently large $p$, we prove the
stronger assertion that $\bar{\rho}_{E,p}$ is surjective.
We first recall the following irreducibility criterion.

\begin{thm}[\cite{SS18}, Proposition~6.1]
	\label{irreducibility of mod $P$ representation}
	Let $L$ be a Galois number field and let $\mfq$ be a prime of $L$.
	Then there exists a constant $C_{L,\mfq}>0$ with the following
	property. Let $p>C_{L,\mfq}$ be a rational prime and let $E/L$ be
	an elliptic curve which is semistable at every prime $\mfp\mid p$
	and has potentially multiplicative reduction at $\mfq$. Then
	$
	\bar{\rho}_{E,p}:G_L\longrightarrow\GL_2(\F_p)
	$
	is irreducible.
\end{thm}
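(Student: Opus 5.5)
The plan is to argue by contradiction, following the Freitas--Siksek/David-type argument used in \cite{SS18}. Suppose $\bar{\rho}_{E,p}$ is reducible. Then its semisimplification is $\theta\oplus\theta'$ for characters $\theta,\theta':G_L\to\F_p^\times$. By the Weil pairing, $\theta\theta'=\chi_p$.

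\emph{Step 1: inertia at $p$ and away from $p$.} We first describe each character on inertia.
\begin{itemize}
\item At primes $\mfp\mid p$: for $p$ unramified in $L$ (true for $p$ large), semistability of $E$ at $\mfp$ gives, by Serre/Raynaud, $\theta|_{I_\mfp}\in\{1,\chi_p|_{I_\mfp}\}$.
\item At primes $\mfq'\nmid p$: if the reduction is potentially good, $\bar{\rho}_{E,p}(I_{\mfq'})$ has order dividing $24$ (Lemma~\ref{criteria for potentially multiplicative reduction}). If it is potentially multiplicative, then on inertia $\theta$ coincides with a character of order $\le 2$.
\end{itemize}
Hence $\theta^{12}$ is unramified away from $p$, and on each $I_\mfp$ with $\mfp\mid p$ it equals $\chi_p^{s_\mfp}$ with $s_\mfp\in\{0,12\}$.

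\emph{Step 2: class field theory.} Since $L$ is Galois, we index the primes above a fixed $\mfp_0\mid p$ by $\sigma\in\Gal(L/\Q)$ and obtain exponents $s_\sigma\in\{0,12\}$. Let $h$ be the class number of $L$ and write $\mfq^h=\alpha\mcO_L$. Global class field theory then gives
\[
\theta^{12}(\Frob_\mfq)^{h}\equiv \prod_{\sigma}\sigma(\alpha)^{s_\sigma}\pmod{\mfp_0},
\]
where the $s_\sigma$ are the exponents attached to $\sigma^{-1}(\mfp_0)$. The unit ambiguity is handled by raising to the order of the relevant finite unit quotient; this only changes the exponents by a bounded factor.

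\emph{Step 3: the Tate curve at $\mfq$.} Over a quadratic extension of $L_\mfq$, $E$ becomes a Tate curve. Hence, as a representation of the decomposition group at $\mfq$, $\bar{\rho}_{E,p}^{ss}\simeq\psi\oplus\psi\chi_p$ with $\psi$ quadratic. It follows that $\theta(\Frob_\mfq)^2\in\{1,N(\mfq)^2\}$ in $\F_p$. Combining with Step 2, $\mfp_0$ divides
\[
\prod_\sigma\sigma(\alpha)^{s_\sigma}-1\qquad\text{or}\qquad\prod_\sigma\sigma(\alpha)^{s_\sigma}-N(\mfq)^{12h},
\]
so $p$ divides the norm of this algebraic integer. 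There are only finitely many choices of $(s_\sigma)$, so if the element is non-zero, $p$ is bounded in terms of $L$ and $\mfq$. For non-vanishing, compare valuations at the conjugates $\sigma^{-1}(\mfq)$:
\begin{itemize}
\item equality with $1$ forces all $s_\sigma=0$;
\item equality with $N(\mfq)^{12h}=\prod_\sigma\sigma(\alpha)^{12}$ forces all $s_\sigma=12$.
\end{itemize}

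\emph{Step 4: the degenerate cases, which I expect to be the main obstacle.} These are: all $s_\sigma=0$ with $\theta(\Frob_\mfq)^2=1$, and the symmetric case all $s_\sigma=12$, which reduces to the first after swapping $\theta$ and $\theta'=\chi_p\theta^{-1}$. In this situation $\theta^{12}$ is unramified everywhere. So $\theta$ becomes trivial over the compositum $L'$ of the Hilbert class field of $L$ with a degree-$12$ cyclic extension. That field lies in an extension whose degree is bounded in terms of $L$, although the field itself depends on $E$.

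Over $L'$, the curve $E$ (after replacing it by an isogenous curve or a quadratic twist so that $\theta$ is the character acting on a line) acquires an $L'$-rational point of order $p$, or a $p$-isogeny with trivial character. Merel's uniform boundedness theorem, applied with $[L':\Q]$ bounded, then rules this out once $p$ exceeds a constant depending only on $[L:\Q]$ and $h$.

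The delicate point is ensuring that all the choices (the extension $L'$, the unit index, the quadratic twist) have degree bounded independently of $E$. I would first try to take $L'$ to be the ray class field of $L$ of modulus $\mfq$ times the product of the bad-reduction primes, composed with its $24$-torsion extension. If that modulus varies with $E$, I would fall back to working only with the twelfth power $\theta^{12}$ and use that $\theta^{12}$ is everywhere unramified, which keeps $L'$ inside the Hilbert class field together with a fixed extension of degree $12$.

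Finally, we take $C_{L,\mfq}$ to be the maximum of the norm bounds from Step 3 over all $(s_\sigma)$, Merel's bound from Step 4, and the primes ramified in $L$.
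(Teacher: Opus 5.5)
Your argument is correct and is essentially the proof of the cited result; the paper itself gives no proof, only the reference to \cite{SS18}, where David's class-field-theoretic description of the isogeny character is used, potentially multiplicative reduction at $\mfq$ forces the signature to be constantly $0$ or $12$, and then $\theta^{12}$ (or $\theta'^{12}$) is everywhere unramified, so $E$ or a $p$-isogenous curve has a point of order $p$ over an extension of degree at most $12h[L:\Q]$, which Merel's theorem excludes for large $p$. The loose ends are cosmetic: there is no unit ambiguity to handle, since class field theory already gives $\prod_\sigma\sigma(u)^{s_\sigma}\equiv 1 \pmod{\mfp_0}$ for global units $u$; no quadratic twist is needed in Step~4; you need $\theta|_{I_{\mfq'}}$ of order dividing $12$, not just $24$, which holds because a $p'$-subgroup of a Borel is abelian; and the ray-class-field detour in your last paragraph should simply be dropped in favour of the $\theta^{12}$ argument you already gave.
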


Combining this result with the inertia computation of
Lemma~\ref{reduction on T and S}, we obtain the following.

\begin{lem}
	\label{surjective of mod p GR}
	Let $\mfP\in S_K$ and let 
	$
	A,B,C\in\mcO_K\setminus\{0\}$ with $\alpha_{\mathfrak P}
	\beta_{\mathfrak P}
	\gamma_{\mathfrak P}
	\neq0$. Let
	$(a,b,c)\in W_{K,\mfP}$ be a solution of
	\eqref{Ax^p+By^p+Cz^p=0} with $p > 	M_{\mfP}(A,B,C)$ and let $E$ be the Frey curve given by
	\eqref{Frey curve for x^2=By^p+Cz^p of Type I}. 
	Then there exists a constant
	$
	D_{K,A,B,C}>0$
	(depending only on $K,A,B,C$) such that, for every prime
	$p>D_{K,A,B,C}$, we have
	$
	\bar{\rho}_{E,p}(G_K)=\GL_2(\F_p).$
	In particular, $\bar{\rho}_{E,p}$ is absolutely irreducible.
\end{lem}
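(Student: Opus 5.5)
We follow the standard route of \cite[Lemma~6.2]{SS18}: first obtain irreducibility from Theorem~\ref{irreducibility of mod $P$ representation}, then upgrade to surjectivity using the image of inertia at $\mfP$ together with the determinant. Theorem~\ref{irreducibility of mod $P$ representation} requires a Galois field, so let $L$ be the Galois closure of $K$ over $\Q$ and fix a prime $\mfq$ of $L$ above $\mfP$. Set $C_{L,\mfq}$ to be the constant from that theorem.

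The hypotheses of Theorem~\ref{irreducibility of mod $P$ representation} are checked for $E/L$ as follows. By Lemma~\ref{reduction on T and S}, since $p>M_{\mfP}(A,B,C)$, the curve $E$ has potentially multiplicative reduction at $\mfP$, i.e.\ $v_{\mfP}(j_E)<0$. Hence $v_{\mfq}(j_E)=e(\mfq/\mfP)v_{\mfP}(j_E)<0$, so $E/L$ is potentially multiplicative at $\mfq$. For semistability above $p$, we enlarge the constant so that $p$ exceeds every rational prime below a prime dividing $ABC$, and $p\geq5$. Then Lemma~\ref{Serre's conductor}(1) gives semistability of $E/K$ at every prime $\mfp\mid p$, and semistability is preserved under base change to $L$. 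Therefore, for $p>C_{L,\mfq}$, $\bar{\rho}_{E,p}|_{G_L}$ is irreducible, and hence so is $\bar{\rho}_{E,p}$.

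For surjectivity, we use Lemma~\ref{reduction on T and S} with $\alpha_{\mfP}\beta_{\mfP}\gamma_{\mfP}\neq0$: in whichever case applies, $p\mid\#\bar{\rho}_{E,p}(I_{\mfP})$. Thus the image $G:=\bar{\rho}_{E,p}(G_K)$ contains an element of order $p$, i.e.\ a transvection, since in $\GL_2(\F_p)$ every element of order $p$ is conjugate to a nontrivial unipotent. By the classification of subgroups of $\GL_2(\F_p)$ (Dickson), a subgroup of order divisible by $p$ either lies in a Borel subgroup or contains $\SL_2(\F_p)$. The Borel case is excluded by irreducibility, so $G\supseteq\SL_2(\F_p)$. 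Finally, Lemma~\ref{Serre's conductor}(2) gives $\det\bar{\rho}_{E,p}=\chi_p$. For $p$ larger than some bound depending on $K$ (so that $p$ is unramified in $K$, whence $K\cap\Q(\zeta_p)=\Q$), $\chi_p$ is surjective onto $\F_p^\times$. Hence $G=\GL_2(\F_p)$.

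We take $D_{K,A,B,C}$ to be the maximum of $C_{L,\mfq}$ over the finitely many $\mfP\in S_K$ (with one chosen $\mfq$ above each), $M_{\mfP}(A,B,C)$, $5$, the primes below primes dividing $ABC$, and the primes ramified in $K$. The main obstacle is the irreducibility step, but it is absorbed entirely into Theorem~\ref{irreducibility of mod $P$ representation}; the only care needed is the base change to the Galois closure and ensuring the constant is independent of the solution. This holds because $\mfq$ depends only on $K$ and $\mfP$.
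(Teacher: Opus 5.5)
Your proposal is correct and follows essentially the same route as the paper. Both arguments get irreducibility from the irreducibility criterion over the Galois closure $L$, then use an order-$p$ element from inertia at $\mfP$ together with the subgroup classification to get $\SL_2(\F_p)$, and finish with surjectivity of the cyclotomic determinant once $K\cap\Q(\zeta_p)=\Q$. The one minor difference is that you apply the classification directly to $\bar{\rho}_{E,p}(G_K)$ using $I_{\mfP}$, whereas the paper applies it to $\bar{\rho}_{E,p}(G_L)$ and therefore needs the extra step $p>[L:K]$ to pass $p\mid\#\bar{\rho}_{E,p}(I_{\mfP})$ down to $I_{\mfq}$; your version is slightly more economical.
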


\begin{proof}
	Let $L$ be the Galois closure of $K$ over $\Q$, and choose a prime
	$\mfq$ of $L$ lying above $\mfP$. We regard the Frey curve $E$ defined over $L$.
	By Lemma~\ref{reduction on T and S}, the curve $E/K$ has potentially multiplicative
	reduction at $\mfP$ and
	$
	p\mid\#\bar{\rho}_{E,p}(I_{\mfP}).$
	Since potentially multiplicative reduction is preserved under finite
	base extension, it follows that $E/L$ has potentially multiplicative
	reduction at $\mfq$.
	Moreover, after enlarging the lower bound for $p$ if necessary,
	the $p$-part of the inertia image is preserved upon restriction
	from $G_K$ to $G_L$. Indeed, the index
	$
	[I_{\mfP}:I_{\mfq}]$
	divides $[L:K]$. Thus, if $p>[L:K]$, the condition
	$
	p\mid\#\bar{\rho}_{E,p}(I_{\mfP})
	$
	implies
	$
	p\mid\#\bar{\rho}_{E,p}(I_{\mfq}).
	$
	In particular,
	\begin{equation}
		\label{p divides inertia over L}
		p\mid
		\#\bar{\rho}_{E,p}(G_L).
	\end{equation}
	
	We next verify the semistability hypothesis of
	Theorem~\ref{irreducibility of mod $P$ representation}. Since no
	prime of $K$ above $p$ divides $2ABC$ for all sufficiently large
	$p$, every prime of $K$
	above $p$ lies outside $S_K'$. Hence, by
	Lemma~\ref{Serre's conductor}, $E$ is semistable at every prime
	of $K$ above $p$. Again, since semistability is preserved under finite
	extensions, it follows that $E/L$ is semistable at every prime of
	$L$ above $p$.
	
	Theorem~\ref{irreducibility of mod $P$ representation} now
	implies that there exists a constant $C_{L,\mfq}>0$ such that
	$
	\bar{\rho}_{E,p}|_{G_L}$ 
	is irreducible for every $p>C_{L,\mfq}$. Hence
	$
	H:=
	\bar{\rho}_{E,p}(G_L)
	\subseteq\GL_2(\F_p)$
	is irreducible.
	As $S_K$ is finite, we may take the maximum of $C_{L,\mfq}$ over
	primes $\mfq$ of $L$ above all primes in $S_K$. Thus the resulting
	lower bound is independent of the chosen $\mfP$ and depends only on
	$K$; together with the preceding bounds, it depends only on
	$K,A,B,C$.
	
	By \eqref{p divides inertia over L}, the order of $H$ is divisible
	by $p$. Hence, by Cauchy's theorem, $H$ contains an element of
	order $p$. Since $H$ is an irreducible subgroup of
	$\GL_2(\F_p)$ whose order is divisible by $p$, the classification
	of subgroups of $\GL_2(\F_p)$ yields
	$
	\SL_2(\F_p)\subseteq H.
	$
	Consequently,
	\begin{equation}
		\label{SL2 contained GK}
		\SL_2(\F_p)
		\subseteq
		\bar{\rho}_{E,p}(G_L)
		\subseteq
		\bar{\rho}_{E,p}(G_K).
	\end{equation}
	
	It remains to determine the image of the determinant. By the Weil
	pairing,
	$
	\det(\bar{\rho}_{E,p})=\chi_p,
	$
	where
	$
	\chi_p:G_K\longrightarrow\F_p^\times
	$
	is the mod-$p$ cyclotomic character.
	
	We claim that, for all sufficiently large $p$,
	$
	K\cap\Q(\zeta_p)=\Q.
	$
	Indeed, since $K/\Q$ is fixed, only finitely many rational primes
	can ramify in $K$. On the other hand, every non-trivial subfield
	of $\Q(\zeta_p)$ is ramified at $p$. Thus, for every sufficiently
	large prime $p$ unramified in $K$, we have
	$
	K\cap\Q(\zeta_p)=\Q.
	$
	Consequently,
	$
	[K(\zeta_p):K]
	=
	[\Q(\zeta_p):\Q]
	=
	p-1.
	$
	It follows that
	$
	\operatorname{im}(\chi_p)=\F_p^\times,
	$
	and hence
	\begin{equation}
		\label{det full image}
		\det\bigl(\bar{\rho}_{E,p}(G_K)\bigr)
		=
		\F_p^\times.
	\end{equation}
	
	Combining \eqref{SL2 contained GK} and \eqref{det full image}, we
	obtain
	\[
	\SL_2(\F_p)
	\subseteq
	\bar{\rho}_{E,p}(G_K)
	\subseteq
	\GL_2(\F_p)
	\]
	and
	$
	\det\bigl(\bar{\rho}_{E,p}(G_K)\bigr)
	=
	\F_p^\times.
	$
	Since
	$
	\GL_2(\F_p)/\SL_2(\F_p)
	\simeq\F_p^\times
	$
	via the determinant map, it follows that
	$
	\bar{\rho}_{E,p}(G_K)=\GL_2(\F_p).
	$
	Thus $\bar{\rho}_{E,p}$ is surjective, and in particular
	absolutely irreducible.
\end{proof}

\subsection{Proof of Theorem~\texorpdfstring{\ref{main result1 for Ax^p+By^p+Cz^p=0}}{1.6}
	via the modularity conjectures}
\label{subsection proof main theorem via conjectures}

We now apply the results established in the preceding subsections,
together with Conjectures~\ref{conj 1} and~\ref{conj 2}, to prove
Theorem~\ref{main result1 for Ax^p+By^p+Cz^p=0}. The main ingredient
is the following auxiliary result, which associates to the Frey curve
a new elliptic curve whose arithmetic properties are controlled
independently of the exponent $p$.

\begin{thm}
	\label{auxilary result x^2=By^p+Cz^p over W_K}
	Let $K$ be a number field for which
	Conjectures~\ref{conj 1} and~\ref{conj 2} hold, and let
	$\mfP\in S_K$. Let
	$
	A,B,C\in\mcO_K\setminus\{0\}$ with $\alpha_{\mathfrak P}
	\beta_{\mathfrak P}
	\gamma_{\mathfrak P}
	\neq0$. Then there exists a constant
	$
	V=V_{K,A,B,C}>0,
	$
	depending only on $K,A,B,C$, with the following property.
	Let $
	(a,b,c)\in W_{K,\mfP}$ 
	be a solution of
	\eqref{Ax^p+By^p+Cz^p=0} with $p>V$ and let $E=E_{a,b,c}$ be the associated
	Frey elliptic curve given by
	\eqref{Frey curve for x^2=By^p+Cz^p of Type I}. Then there exists
	an elliptic curve $E'/K$ satisfying the following properties:
	\begin{enumerate}
		\item $E'$ has good reduction away from $S_K'$;
		
		\item $E'$ has full $2$-torsion over $K$, that is,
		$
		E'[2]\subseteq E'(K);$
		
		\item the residual Galois representations associated to $E$
		and $E'$ are isomorphic:
		$
		\bar{\rho}_{E,p}
		\sim
		\bar{\rho}_{E',p};$
		
		\item $E'$ has potentially multiplicative reduction at
		$\mfP$; equivalently,
		$
		v_{\mfP}(j_{E'})<0.$
	\end{enumerate}
\end{thm}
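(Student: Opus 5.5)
The plan follows the strategy of \cite[Theorem~5.6]{SS18}. Fix $(a,b,c)\in W_{K,\mfP}$ with $p$ large, and let $E$ be the Frey curve. Take $V$ larger than: $M_{\mfP}(A,B,C)$; the constant $D_{K,A,B,C}$ of Lemma~\ref{surjective of mod p GR}; all primes dividing $\operatorname{disc}(K)\,N_{K/\Q}(ABC)$; and the constants $B_{\mfn}$ of Proposition~\ref{lifting to complex one}, where $\mfn$ runs over the finitely many possible Serre conductors in Lemma~\ref{Serre's conductor}(4). With these bounds, $p$ is unramified in $K$. Lemma~\ref{Serre's conductor} and Lemma~\ref{surjective of mod p GR} show that $\bar\rho_{E,p}$ is odd, irreducible, has cyclotomic determinant (so trivial character), and is finite flat at primes above $p$. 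Conjecture~\ref{conj 1} therefore gives a mod-$p$ eigenform $\theta$ of level $\mfn$. By Proposition~\ref{lifting to complex one}, $\theta$ lifts to a complex eigenform $f$ of level $\mfn$, and after passing to a newform we may assume $f$ is new of level dividing $\mfn$.

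Next we show $\Q_f=\Q$ for $p$ large. The set of possible $f$ is finite and independent of $p$. Suppose $\Q_f\ne\Q$. Choose a prime $\mfq\notin S_K'$ with $f(T_\mfq)\notin\Z$. The standard trick gives $\Tr\bar\rho_{E,p}(\Frob_\mfq)\equiv f(T_\mfq)\pmod{\mfp}$. Here $\Tr\bar\rho_{E,p}(\Frob_\mfq)$ lies in a finite set of integers when $E$ has good reduction at $\mfq$ (Hasse bound), and equals $\pm(N\mfq+1)$ in the multiplicative case. So $p$ divides one of finitely many nonzero norms $N_{\Q_f/\Q}(f(T_\mfq)-t)$, which bounds $p$. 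Similarly $f$ is non-trivial: otherwise $\bar\rho_{E,p}$ would be reducible (compare traces with $1+\chi_p$ via Chebotarev), contradicting surjectivity. Conjecture~\ref{conj 2} then produces either an elliptic curve $E_f$ of conductor $\mfn$ or a fake elliptic curve $A_f$, with matching Frobenius traces. From the trace congruences for all $\mfq\nmid p\mfn$, Chebotarev and Brauer--Nesbitt give $\bar\rho_{E,p}\sim\bar\rho_{E_f,p}$, or $\bar\rho_{E,p}\sim\bar\rho_{A_f,p}$ where the latter is the $2$-dimensional piece of $A_f[p]$ cut out by the quaternionic action.

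The step I expect to be the main obstacle is excluding the fake elliptic curve. For this I would use that $A_f$ has potentially good reduction everywhere (fake elliptic curves have quaternionic multiplication), so $\#\bar\rho_{A_f,p}(I_\mfP)$ is bounded independently of $p$; one can cite the bound $\mid 24$ as in \cite{SS18}. On the other side, Lemma~\ref{reduction on T and S} together with $\alpha_\mfP\beta_\mfP\gamma_\mfP\neq0$ gives $p\mid\#\bar\rho_{E,p}(I_\mfP)$, a contradiction for $p>3$. So we obtain an elliptic curve $E_1:=E_f$ with good reduction outside $S_K'$ (its conductor $\mfn$ is supported on $S_K'$) and $\bar\rho_{E,p}\sim\bar\rho_{E_1,p}$. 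The same inertia argument, via Lemma~\ref{criteria for potentially multiplicative reduction}, forces $v_\mfP(j_{E_1})<0$: otherwise $\#\bar\rho_{E_1,p}(I_\mfP)\mid24$.

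Finally we arrange full $2$-torsion. The Frey curve $E$ has full $2$-torsion, so for $\mfq\notin S_K'\cup\{\mfp\mid p\}$ of good reduction, $4\mid\#E(\mathcal O_K/\mfq)$, and hence $\#E_1(\mathcal O_K/\mfq)\equiv\#E(\mathcal O_K/\mfq)\pmod p$. Following \cite[Lemma~7.4]{SS18} (or \cite[\S4]{FS15}), for $p$ larger than a constant depending only on the finitely many $E_1$, this forces $E_1$ to have a $K$-rational $2$-torsion point, or some curve $K$-isogenous to $E_1$ to have full $2$-torsion. Concretely: if $E_1[2]$ were not fully rational, some positive density of Frobenii would act on $E_1[2]$ with $\#E_1(\mathcal O_K/\mfq)\not\equiv0\pmod 4$. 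Since the Hasse bound makes $\#E_1(\mathcal O_K/\mfq)-\#E(\mathcal O_K/\mfq)$ a fixed small integer for a fixed $\mfq$, choosing a finite set of such $\mfq$ independent of $p$ bounds $p$. Replacing $E_1$ by a $K$-isogenous curve $E'$ with $E'[2]\subseteq E'(K)$ (isogenies of $2$-power degree, coprime to $p$) preserves $\bar\rho_{\cdot,p}$, the conductor, and $j$-valuation negativity. This yields properties (1)--(4).
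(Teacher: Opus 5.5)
Your proposal follows essentially the same route as the paper. The steps are the same: Conjecture~\ref{conj 1} plus lifting, forcing $\Q_f=\Q$ for large $p$, excluding the fake elliptic curve via $p\mid\#\bar\rho_{E,p}(I_\mfP)$, passing to a $K$-isogenous curve with full $2$-torsion, and reading off $v_\mfP(j_{E'})<0$ from Lemma~\ref{criteria for potentially multiplicative reduction}; the only difference is that you sketch by hand what the paper simply cites from \cite[Lemmas~7.2, 7.3, 7.5]{SS18}.

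One caution about those sketches. Your ``concretely'' claim in the $2$-torsion step is false as stated: a $K$-rational point of order $4$ gives $4\mid\#E_1(\mcO_K/\mfq)$ at every odd good prime, yet $E_1$ need not have full $2$-torsion. The correct input is Katz's theorem: if $4\mid\#E_1(\mcO_K/\mfq)$ for almost all $\mfq$, some isogenous curve $E''$ has $4\mid\#E''(K)_{\mathrm{tors}}$, and in the $\Z/4\Z$ case a further $2$-isogeny gives full $2$-torsion. In addition, at your fixed auxiliary primes $\mfq$ you must rule out multiplicative reduction of the Frey curve. The Hasse bound does this once $p>N_{K/\Q}(\mfq)+1+2\sqrt{N_{K/\Q}(\mfq)}$.
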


\begin{proof}[Proof of Theorem~\ref{auxilary result x^2=By^p+Cz^p over W_K}]
	By Lemma~\ref{surjective of mod p GR}, there exists a constant
	$C_{K,A,B,C}>0$, depending only on $K,A,B,C$, such that, for every
	prime $p>C_{K,A,B,C}$, the representation
	$
	\bar{\rho}_{E,p}:G_K\longrightarrow\GL_2(\F_p)$ 
	is surjective. In particular, $\bar{\rho}_{E,p}$ is absolutely
	irreducible.
	
	Let $\mfn$ denote the Serre conductor of $\bar{\rho}_{E,p}$.
	By Lemma~\ref{Serre's conductor}, the ideal $\mfn$ is supported
	on $S_K'$ and belongs to a finite set of ideals depending only on
	$K,A,B,C$. Moreover, $\bar{\rho}_{E,p}$ is odd, its determinant
	is the mod-$p$ cyclotomic character, and it is finite flat at
	every prime above $p$. Thus, after enlarging $C_{K,A,B,C}$ if
	necessary, all the hypotheses of Conjecture~\ref{conj 1} are
	satisfied. Hence there exists a weight-$2$ mod-$p$ eigenform
	$\theta$ over $K$ of level $\mfn$ such that
	$
	\Tr\left(
	\bar{\rho}_{E,p}(\Frob_{\mfq})
	\right)
	=
	\theta(T_{\mfq})$
	for every prime $\mfq\nmid p\mfn$.
	
	Since there are only finitely many possibilities for $\mfn$, all
	depending only on $K,A,B,C$, Proposition~
	\ref{lifting to complex one} shows that, after increasing the
	lower bound for $p$ if necessary, the mod-$p$ eigenform $\theta$
	lifts to a complex eigenform $f$ of weight $2$ and level $\mfn$.
	Thus, for some prime $\mfp$ of $\Q_f$ above $p$, we have
	$
	f(T_{\mfq})
	\equiv
	\theta(T_{\mfq})
	\pmod{\mfp}
	$
	for every $\mfq\nmid p\mfn$. In particular,
	$
	\bar{\rho}_{E,p}
	\sim
	\bar{\rho}_{f,\mfp}.$ 
	As the set of possible levels $\mfn$ is finite, there are only
	finitely many complex eigenforms $f$ which can arise in this way;
	this finite collection depends only on $K,A,B,C$ and is
	independent of $p$ and of the solution $(a,b,c)$.
	
	We next show that, for $p$ sufficiently large, the coefficient field $\Q_f$ of
	$f$ is $\Q$. Suppose that $\Q_f\neq\Q$. By
	\cite[Lemma~7.2]{SS18}, there exists a constant $C_f$, depending
	only on $f$, such that
	$
	p<C_f.
	$
	Since there are only finitely many possibilities for $f$, we may
	enlarge the lower bound for $p$ so as to exclude all these
	possibilities. Consequently,
	$
	\Q_f=\Q.
	$
	Furthermore, since $\bar{\rho}_{E,p}$ is irreducible, $f$ is
	non-trivial. Replacing $f$, if necessary, by the new eigenform
	in its equivalence class, we may assume that $f$ is a
	non-trivial new eigenform of some level
	$
	\mfn'\mid\mfn.$
	
	We can now apply Conjecture~\ref{conj 2}. It follows that $f$ has
	associated to it either an elliptic curve $E_f/K$ of conductor
	$\mfn'$, or, in the totally complex case, a fake elliptic curve
	$A_f/K$ of conductor $(\mfn')^2$.
	We now exclude the latter possibility. By Lemma~\ref{reduction on T and S}, after increasing the lower
	bound for $p$ if necessary, we have
	$
	v_{\mfP}(j_E)<0,$  
	$p\nmid v_{\mfP}(j_E),$
	and hence
	\begin{equation}
		\label{large inertia in auxiliary theorem}
		p\mid
		\#\bar{\rho}_{E,p}(I_{\mfP}).
	\end{equation}
	By \cite[Lemma~7.3]{SS18}, for sufficiently large $p$ this
	excludes the possibility that $f$ is associated to a fake
	elliptic curve. Thus $f$ has an associated elliptic curve
	$E_f/K$ of conductor $\mfn'$. Moreover,
	$
	\bar{\rho}_{E,p}
	\sim
	\bar{\rho}_{E_f,p}.$
	
	Let $V=V_{K,A,B,C}$ be the maximum of all the constants introduced
	above, as well as $5$. We now verify the required properties.
	
	\begin{enumerate}
		\item Since
		$
		\mathfrak N_{E_f}=\mfn'\mid\mfn$
		and $\mfn$ is supported on $S_K'$, the elliptic curve $E_f$
		has good reduction away from $S_K'$.
		We next arrange that the elliptic curve has full $2$-torsion.
		By construction, the Frey curve $E$ has full $2$-torsion over
		$K$.
		By \cite[Lemma~7.5]{SS18}, after enlarging $V$ if necessary,
		there exists an elliptic curve $E'/K$, isogenous to $E_f$,
		such that
		$
		E'[2]\subseteq E'(K).$
		Since good reduction is invariant under isogeny, $E'$ also has
		good reduction away from $S_K'$. This proves~(1) and~(2).
		
		\item Since $E_f$ and $E'$ are $K$-isogenous, their
		$p$-adic, and hence their mod-$p$ semisimplified, Galois
		representations agree. Therefore
		$
		\bar{\rho}_{E,p}
		\sim
		\bar{\rho}_{E_f,p}
		\sim
		\bar{\rho}_{E',p}.$
		This proves~(3).
		
		\item Finally, by
		\eqref{large inertia in auxiliary theorem} and the
		isomorphism
		$
		\bar{\rho}_{E,p}\sim\bar{\rho}_{E',p},$
		we obtain
		$
		p\mid
		\#\bar{\rho}_{E',p}(I_{\mfP}).$
		Suppose, to the contrary, that
		$
		v_{\mfP}(j_{E'})\geq0.$
		Then Lemma~
		\ref{criteria for potentially multiplicative reduction}(1)
		gives
		$
		\#\bar{\rho}_{E',p}(I_{\mfP})\mid24.
		$
		Since $p>5$, this contradicts
		$
		p\mid
		\#\bar{\rho}_{E',p}(I_{\mfP}).$
		Consequently,
		$
		v_{\mfP}(j_{E'})<0,
		$
		and hence $E'$ has potentially multiplicative reduction at
		$\mfP$. This proves~(4).
	\end{enumerate}
	This proves the theorem.
\end{proof}

We now prove Theorem~\ref{main result1 for Ax^p+By^p+Cz^p=0}.
The proof follows the same strategy as that of
\cite[Theorem~2.5]{S26}.
\begin{proof}[Proof of Theorem~\ref{main result1 for Ax^p+By^p+Cz^p=0}]
	Let $V=V_{K,A,B,C}$ be the constant appearing in
	Theorem~\ref{auxilary result x^2=By^p+Cz^p over W_K}, and suppose,
	for a contradiction, that
	$
	(a,b,c)\in W_{K,\mfP}$ 
	is a solution of \eqref{Ax^p+By^p+Cz^p=0} with prime exponent
	$p>V$.
	By Theorem~
	\ref{auxilary result x^2=By^p+Cz^p over W_K}, there exists an
	elliptic curve $E'/K$ which has full $2$-torsion over $K$, has
	good reduction away from $S_K'$, and satisfies
	\begin{equation}
		\label{negative valuation E prime}
		v_{\mfP}(j_{E'})<0.
	\end{equation}

	Since $E'$ has full $2$-torsion over $K$, it admits a model of
	the form
	$
	E':Y^2=(X-e_1)(X-e_2)(X-e_3),
	$
	where $e_1,e_2,e_3\in K$ are distinct. Put
	$
	\lambda=\frac{e_3-e_1}{e_2-e_1}.
	$
	Then
	$
	\lambda\in K\setminus\{0,1\},
	$
	and $E'$ is isomorphic over $\overline K$ to the Legendre curve
	$
	E_\lambda:y^2=x(x-1)(x-\lambda).$
	Consequently,
	\begin{equation}
		\label{j'-invariant of Legendre form}
		j_{E'}
		=
		j(E_\lambda)
		=
		2^8
		\frac{(\lambda^2-\lambda+1)^3}
		{\lambda^2(1-\lambda)^2}.
	\end{equation}
	
	The natural action of the symmetric group $S_3$ on
	$\{e_1,e_2,e_3\}$ induces an action on the cross-ratio $\lambda$.
	The resulting orbit is
	$
	\left\{
	\lambda,\,
	\frac{1}{\lambda},\,
	1-\lambda,\,
	\frac{1}{1-\lambda},\,
	\frac{\lambda}{\lambda-1},\,
	\frac{\lambda-1}{\lambda}
	\right\}.$
	These are the six $\lambda$-invariants associated to $E'$
	(cf.~\cite[Page~14]{S26} and \cite[\S5]{FS15}).
	
	Since $E'$ has good reduction away from $S_K'$, its $j$-invariant
	is $S_K'$-integral:
	$
	j_{E'}\in\mcO_{S_K'}.$
	From \eqref{j'-invariant of Legendre form}, $\lambda$ satisfies
	the monic polynomial
	$
	X^6-3X^5+
	\left(3-\frac{j_{E'}}{2^8}\right)X^4
	+\left(\frac{2j_{E'}}{2^8}-1\right)X^3
	+\left(3-\frac{j_{E'}}{2^8}\right)X^2
	-3X+1.$
	Since every prime above $2$ belongs to $S_K'$, we have
	$
	2\in\mcO_{S_K'}^\ast.$
	Hence the above polynomial has coefficients in $\mcO_{S_K'}$,
	and therefore
	$
	\lambda\in\mcO_{S_K'}.$
	Applying the same argument to the other $\lambda$-invariants gives
	$
	\frac{1}{\lambda},\ 
	1-\lambda,\ 
	\frac{1}{1-\lambda}
	\in\mcO_{S_K'}.$
	Setting
	$
	\mu=1-\lambda,$
	we obtain
	$
	\lambda,\mu\in\mcO_{S_K'}^\ast,\ 
	\lambda+\mu=1.$
	Thus $(\lambda,\mu)$ is a solution of the $S_K'$-unit equation
	\eqref{S_K-unit solution}.
	
	Using $\lambda+\mu=1$,  \eqref{j'-invariant of Legendre form} becomes
	\begin{equation}
		\label{j' in terms of lambda and mu}
		j_{E'}
		=
		2^8
		\frac{(1-\lambda\mu)^3}{(\lambda\mu)^2}.
	\end{equation}
	
	By the hypothesis \eqref{assumption for main result x^p+y^p=2^rz^p}
	at the fixed prime $\mfP$, we have
	$
	s:=
	\max\left\{
	|v_{\mfP}(\lambda)|,\,
	|v_{\mfP}(\mu)|
	\right\}
	\leq4v_{\mfP}(2).$
	We distinguish two cases.
	
	\medskip
	
	\noindent\emph{Case 1: $s=0$.}
	Then
	$
	v_{\mfP}(\lambda)
	=
	v_{\mfP}(\mu)
	=0.$ 
	Hence
	$
	v_{\mfP}(\lambda\mu)=0$
	and, since $\lambda\mu$ is $\mfP$-integral,
	$
	v_{\mfP}(1-\lambda\mu)\geq0.
	$
	Therefore, by \eqref{j' in terms of lambda and mu},
	$
	v_{\mfP}(j_{E'})
	=
	8v_{\mfP}(2)
	+
	3v_{\mfP}(1-\lambda\mu)
	\geq
	8v_{\mfP}(2)>0.
	$
	This contradicts \eqref{negative valuation E prime}.
	
	\medskip
	
	\noindent\emph{Case 2: $s>0$.}
	Since $\lambda+\mu=1$, the non-archimedean triangle inequality
	shows that the pair
	$
	\bigl(v_{\mfP}(\lambda),v_{\mfP}(\mu)\bigr)
	$
	is of one of the following three forms:
	\[
	(-s,-s),\qquad (0,s),\qquad (s,0).
	\]
	
	Suppose first that
	$
	v_{\mfP}(\lambda)
	=
	v_{\mfP}(\mu)
	=-s.
	$
	Then
	$
	v_{\mfP}(\lambda\mu)=-2s<0.
	$
	Consequently,
	$
	v_{\mfP}(1-\lambda\mu)
	=
	v_{\mfP}(\lambda\mu)
	=
	-2s.
	$
	It follows from \eqref{j' in terms of lambda and mu} that
$v_{\mfP}(j_{E'})=
		8v_{\mfP}(2)-2s.$
	On the other hand, suppose that
	$
	\bigl(v_{\mfP}(\lambda),v_{\mfP}(\mu)\bigr)
	=(0,s)
	\quad\text{or}\quad
	(s,0).$
	Then
	$
	v_{\mfP}(\lambda\mu)=s>0,
	$
	and hence
	$
	v_{\mfP}(1-\lambda\mu)=0.$
	Therefore
	$
	v_{\mfP}(j_{E'})
	=
	8v_{\mfP}(2)-2s.
	$
	Thus, in all cases with $s>0$, we obtain
	$
	v_{\mfP}(j_{E'})
	=
	8v_{\mfP}(2)-2s.$
	Since
	$
	s\leq4v_{\mfP}(2),$
	it follows that
	$
	v_{\mfP}(j_{E'})
	\geq0,$
	again contradicting \eqref{negative valuation E prime}.
	
	Thus both cases lead to a contradiction. Therefore, for every
	prime $p>V$, equation \eqref{Ax^p+By^p+Cz^p=0} admits no
	solution in $W_{K,\mfP}$. This completes
	the proof.
\end{proof}

\begin{proof}[Proof of Proposition~\ref{S unit crit 2d}]
	Put $T:=\{\mfq\in P:\mfq\mid2d\}$. Since
	$A,B,C\in\{u2^rd^s:u\in\mcO_K^\times,\ r,s\in\Z_{\geq0}\}$,
	we have $S_K'\subseteq T$. By assumption, $2$ is inert in $K$;
	write $2\mcO_K=\mfP$. Proposition~3.5 of \cite{S26b} shows that
	every solution $(\lambda,\mu)$ of the $T$-unit equation satisfies
	\[
	\max\{|v_\mfP(\lambda)|,|v_\mfP(\mu)|\}\leq4=4v_\mfP(2).
	\]
	Every $S_K'$-unit is a $T$-unit, so the same bound holds for every
	solution of the $S_K'$-unit equation. The result now follows from
	Theorem~\ref{main result1 for Ax^p+By^p+Cz^p=0}.
\end{proof}
\begin{proof}[Proof of Corollary~\ref{irrelevant cor}]
		Recall that the irrelevant solutions to the $S_K^\prime$-unit
		equation~\eqref{S_K-unit solution} are
		$
		(2,-1),\  (-1,2),\ 
		\left(\frac{1}{2},\frac{1}{2} \right)$. In each of these cases, for every
		$\mathfrak P\in S_K$ we have
		\[
		\max\left\{
		|v_{\mathfrak P}(\lambda)|,
		|v_{\mathfrak P}(\mu)|
		\right\}
		\leq v_{\mathfrak P}(2) 	\leq 4v_{\mathfrak P}(2).
		\]
		Thus every solution of the $S_K^\prime$-unit equation satisfies the
		hypothesis~\eqref{assumption for main result x^p+y^p=2^rz^p} of
		Theorem~\ref{main result1 for Ax^p+By^p+Cz^p=0}. The desired
		conclusion therefore follows immediately from
		Theorem~\ref{main result1 for Ax^p+By^p+Cz^p=0}.
\end{proof}

\section{Proof of Theorem~\texorpdfstring{\ref{imaginary quadratic generalized Fermat}}{1.11}}
\label{section for loc criteria}
In this section, we prove Theorem~\ref{imaginary quadratic generalized Fermat} and Proposition~\ref{thm for density}.
%%%%%%%%%%%%%%%%%%%%%
\begin{proof}[Proof of Theorem~\ref{imaginary quadratic generalized Fermat}]
	Since $-d\not\equiv1\pmod8$, we have either
	$-d\equiv5\pmod8$
	or
	$-d\equiv2,3\pmod4.$
	Thus, the prime $2$ is either inert or ramified in $K$. In particular,
	there is a unique prime $\mathfrak P$ of $K$ lying above $2$.
	Since
	$A,B,C\in\{\pm2^r:r\in\mathbb Z_{\geq0}\}$, we have
	$S_K^\prime=S_K=\{\mathfrak P\}$.
	We first note that, since $d\geq5$, the only roots of unity in $K$ are
	$\pm1$. Hence $\mathcal O_K^\times=\{\pm1\}$. By
	Theorem~\ref{main result1 for Ax^p+By^p+Cz^p=0}, it is enough to
	show that every solution $(\lambda,\mu)$ of the $S_K^\prime$-unit
	equation $\lambda+\mu=1$ satisfies
	\[
	\max\{|v_\mfP(\lambda)|,|v_\mfP(\mu)|\}\leq4v_\mfP(2)
	\]
	at the unique prime $\mfP\in S_K$.
	
	Suppose first that $-d\equiv5\pmod8$. Then $2$ is inert in $K$, and
	$
	(2)=\mathfrak P.
	$
	In particular, $\mathfrak P$ is principal. Thus, for every
	$S_K^\prime$-unit $u$, we have
	$
	(u)=\mathfrak P^n=(2^n)$
	for some $n\in\mathbb Z$. Consequently,
	$
	u=\pm2^n.$
	
	Now suppose that $-d\equiv2$ or $3\pmod4$. Then $2$ is ramified and
	$
	(2)=\mathfrak P^2.
	$
	Moreover, $\mathfrak P$ is not principal. Indeed, if
	$\mathfrak P=(\alpha)$ for some
	$\alpha=a+b\sqrt{-d}\in\mathcal O_K$, then
	$
	2=N(\mathfrak P)
	=\left|N_{K/\mathbb Q}(\alpha)\right|
	=a^2+db^2,$
	which is impossible since $d\geq5$. Therefore $\mathfrak P$ is not principal, and hence the ideal class of $\mathfrak P$
	has order $2$. If $u$ is an $S_K^\prime$-unit, then
	$
	(u)=\mathfrak P^n$
	is principal, and therefore $n$ must be even. Thus
	$
	(u)=\mathfrak P^{2r}=(2^r)
	$
	for some $r\in\mathbb Z$, and consequently
	$
	u=\pm2^r.
	$
	
	Therefore, in either case, every $S_K^\prime$-unit is of the form
	$
	\pm2^r$, where $ r\in\mathbb Z.
	$
	It follows that
	$
	\lambda=\pm2^r$ and $
	\mu=\pm2^s,$
	for some $r,s\in\mathbb Z$. Since $\lambda+\mu=1$, we obtain
	$
	\pm2^r\pm2^s=1.
	$
	The only solutions are
	\[
	(\lambda,\mu)
	=(2,-1),\qquad
	(-1,2),\qquad
	\left(\frac12,\frac12\right).
	\]
	In particular,
	$
	\max\left\{
	|v_{\mathfrak P}(\lambda)|,
	|v_{\mathfrak P}(\mu)|
	\right\}\leq 	v_{\mathfrak P}(2).
	$
	Since
	$
	v_{\mathfrak P}(2)=
	\begin{cases}
		1,& -d\equiv5\pmod8,\\
		2,& -d\equiv2,3\pmod4,
	\end{cases}
	$
	we have
	$
	\max\left\{
	|v_{\mathfrak P}(\lambda)|,
	|v_{\mathfrak P}(\mu)|
	\right\}
	\leq2
	\leq4v_{\mathfrak P}(2).
	$
	The result now follows from
	Theorem~\ref{main result1 for Ax^p+By^p+Cz^p=0}.
\end{proof}
%\label{Proof of the thm for density}
To prove Proposition~\ref{thm for density}, we first recall the
notion of absolute density.

\begin{dfn}
	Let $S\subseteq\N$, and for $x>0$ define
	$
	S(x):=\{d\in S:d\leq x\}.$
	The \emph{absolute density} of $S$ is defined by
	\[
	\delta(S)
	:=
	\lim_{x\to\infty}\frac{\#S(x)}{x},
	\]
	provided that the limit exists.
\end{dfn}

We shall use the following result of Freitas and Siksek.

\begin{thm}[\cite{FS15}, Theorem~10]
	\label{FS thm for density}
	For $r\in\Z$ and $m\in\N$, let
	$
	\N_{r,m}^{\mathrm{sf}}
	:=
	\{d\in\N^{\mathrm{sf}}:d\equiv r\pmod m\}.$
	If $s:=\gcd(r,m)$ is squarefree, then
	\[
	\#\N_{r,m}^{\mathrm{sf}}(x)
	\sim
	\frac{\varphi(m)}
	{s\varphi(m/s)m
		\displaystyle\prod_{q\mid m}
		\left(1-\frac{1}{q^2}\right)}
	\cdot\frac{6}{\pi^2}x,
	\]
	where $\varphi$ denotes Euler's totient function and the product
	runs over all prime divisors $q$ of $m$.
\end{thm}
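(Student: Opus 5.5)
The plan is a Möbius-inversion count, first for coprime residue classes and then for general $r$ by factoring out $s=\gcd(r,m)$.

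\textbf{Step 1: coprime residue class.} Let $\gcd(a,q)=1$. I claim
\[
\#\{n\le x:\ n\equiv a \pmod q,\ n \text{ squarefree}\}=\frac{6}{\pi^2}\cdot\frac{x}{q\prod_{p\mid q}(1-p^{-2})}+O(\sqrt{x}).
\]
\begin{enumerate}
\item Write $\mu^2(n)=\sum_{k^2\mid n}\mu(k)$ and swap the order of summation.
\item If $\gcd(k,q)>1$, then $k^2\mid n$ is incompatible with $n\equiv a\pmod q$, since $\gcd(a,q)=1$. So only $k$ coprime to $q$ contribute.
\item For such $k$, the conditions $k^2\mid n$ and $n\equiv a \pmod q$ define a single class modulo $qk^2$, by CRT. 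It contains $x/(qk^2)+O(1)$ integers $n\le x$.
\item Summing over $k\le\sqrt{x}$ gives the main term $\frac{x}{q}\sum_{\gcd(k,q)=1}\mu(k)k^{-2}$, plus an error $O(\sqrt{x})$. The error covers both the $O(1)$ terms and the tail $\sum_{k>\sqrt x}k^{-2}$.
\item The Euler product gives $\sum_{\gcd(k,q)=1}\mu(k)k^{-2}=\prod_{p\nmid q}(1-p^{-2})=\frac{6}{\pi^2}\prod_{p\mid q}(1-p^{-2})^{-1}$.
\end{enumerate}

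\textbf{Step 2: reduction to Step 1.} Put $m'=m/s$ and $r'=r/s$.
\begin{enumerate}
\item Any $d\in\N^{\mathrm{sf}}_{r,m}$ is divisible by $s$. Since $d$ is squarefree, $d=se$ with $e$ squarefree and $\gcd(e,s)=1$.
\item The condition $d\equiv r\pmod m$ is equivalent to $e\equiv r'\pmod{m'}$.
\item We have $\gcd(r',m')=1$. This is where the squarefreeness of $s$ enters. If a prime $p$ divided both $r'$ and $m'$, then $ps\mid\gcd(r,m)=s$, which is absurd.
\item Conversely, every such $e$ gives a squarefree $d=se$ in the progression. Here we use that $s$ is squarefree and $\gcd(e,s)=1$.
\item Let $t$ be the product of the primes dividing $s$ but not $m'$. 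The coprimality $\gcd(e,s)=1$ is automatic at primes dividing $m'$, because $\gcd(e,m')=1$. So it reduces to $\gcd(e,t)=1$.
\end{enumerate}
By CRT, the conditions on $e$ pick out exactly $\varphi(m't)/\varphi(m')$ reduced residue classes modulo $m't$. Since $s$ is squarefree, the primes dividing $m't$ are exactly the primes dividing $m$.

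\textbf{Step 3: bookkeeping.} Apply Step 1 to each of these classes with $q=m't$, counting $e\le x/s$. This gives
\[
\#\N^{\mathrm{sf}}_{r,m}(x)\sim\frac{\varphi(m't)}{\varphi(m')}\cdot\frac{x}{s\,m't\prod_{q\mid m}(1-q^{-2})}\cdot\frac{6}{\pi^2}.
\]
The rad$(m't)=$ rad$(m)$ fact from Step 2 gives $\varphi(m't)/(m't)=\varphi(m)/m$. So the constant becomes $\varphi(m)/\bigl(s\,\varphi(m/s)\,m\prod_{q\mid m}(1-q^{-2})\bigr)$, which is the claimed formula.

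\textbf{Main obstacle.} The analytic content in Step 1 is routine. The step needing care is the bookkeeping in Steps 2 and 3. It has to identify exactly which coprimality conditions the squarefreeness of $d$ imposes on $e$. In particular, it must separate the primes of $s$ that also divide $m'$ from those that do not. It must also check that the resulting class count combines with $\varphi(m't)/(m't)=\varphi(m)/m$ to give precisely the stated constant.
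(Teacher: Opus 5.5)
The paper gives no proof of this statement. It is quoted from Freitas--Siksek \cite{FS15} and used only as a black box in the proof of Proposition~\ref{thm for density}, so your proposal has to stand on its own.

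It does: the argument is correct and self-contained.
\begin{itemize}
\item Step~1 is the classical asymptotic for squarefree integers in a reduced residue class, i.e.\ the case $s=1$. Your M\"obius argument proves it with error $O(\sqrt{x})$. The observation that $k$ with $\gcd(k,q)>1$ contribute nothing is exactly what makes the Euler product come out as $\prod_{p\nmid q}(1-p^{-2})$.
\item The reduction in Step~2 is right. You pass to $e=d/s$ with $e\equiv r/s \pmod{m/s}$ and $\gcd(e,t)=1$, where $t$ is the product of the primes dividing $s$ but not $m/s$. Since $\gcd(m/s,t)=1$, these conditions give $\varphi(t)$ reduced classes modulo $(m/s)t$.
\item The constant in Step~3 checks out. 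With $s$ squarefree,
\[
\frac{\varphi(t)}{t}=\prod_{p\mid s,\ p\,\|\,m}\Bigl(1-\frac1p\Bigr)=\frac{\varphi(m)}{s\,\varphi(m/s)},
\]
which is exactly the factor in the statement.
\end{itemize}

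Two of your side remarks misplace where the hypothesis is used, though neither affects validity.
\begin{itemize}
\item $\gcd(r/s,m/s)=1$ holds for arbitrary $r,m$, since $\gcd(r,m)/s=1$.
\item $\mathrm{rad}((m/s)t)=\mathrm{rad}(m)$ also holds with no assumption on $s$.
\end{itemize}
Squarefreeness of $s$ is genuinely used only in your item~(4), the converse direction. Without it, every element of the progression is divisible by the non-squarefree integer $s$, so $\N^{\mathrm{sf}}_{r,m}$ is empty. It is also used in the identity for $\varphi(m)/(s\varphi(m/s))$ above, which needs $v_p(s)=1$ for every $p\mid s$.
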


We are now ready to prove Proposition~\ref{thm for density}.

\begin{proof}[Proof of Proposition~\ref{thm for density}]
	Recall that
	$
	U
	:=
	\left\{
	d\in\N^{\mathrm{sf}}:
	d\geq5\text{ and }-d\not\equiv1\pmod8
	\right\}.
	$
	Let
	$
	V
	:=
	\left\{
	d\in\N^{\mathrm{sf}}:
	-d\equiv1\pmod8
	\right\}.
	$
	Since $-d\equiv1\pmod8$ if and only if $d\equiv7\pmod8$, we have
	$
	V=\N_{7,8}^{\mathrm{sf}}.
	$
	Moreover,
	$
	\N^{\mathrm{sf}}
	=
	\{2,3\}\cup U\cup V.$
	The finite set $\{2,3\}$ does not affect density. Therefore,
	$
	\delta_{\mathrm{rel}}(U)
	=
	1-\delta_{\mathrm{rel}}(V).
	$
	
	We first compute the absolute density of $\N^{\mathrm{sf}}$.
	Since
	$
	\N^{\mathrm{sf}}=\N_{0,1}^{\mathrm{sf}},
	$
	Theorem~\ref{FS thm for density} gives
	$
	\#\N^{\mathrm{sf}}(x)
	\sim
	\frac{6}{\pi^2}x.
	$
	Hence
	$
	\delta(\N^{\mathrm{sf}})
	=
	\frac{6}{\pi^2}.
	$
	Consequently, for any $S\subseteq\N^{\mathrm{sf}}$ for which the
	relevant density exists,
	$
	\delta_{\mathrm{rel}}(S)
	=
	\frac{\delta(S)}
	{\delta(\N^{\mathrm{sf}})}
	=
	\frac{\pi^2}{6}\delta(S).
	$
	
	It remains to compute the absolute density of $V$. Since
	$
	V=\N_{7,8}^{\mathrm{sf}},
	$
	we apply Theorem~\ref{FS thm for density} with
	$
	r=7,\ m=8,\ s=\gcd(7,8)=1.
	$
	We have
	$
	\varphi(8)=4$ and the only prime divisor of $8$ is $2$. Therefore,
	\begin{align*}
		\#\N_{7,8}^{\mathrm{sf}}(x)
		&\sim
		\frac{4}
		{1\cdot4\cdot8
			\left(1-\frac{1}{2^2}\right)}
		\cdot\frac{6}{\pi^2}x\\
		&=
		\frac{4}
		{32\cdot\frac34}
		\cdot\frac{6}{\pi^2}x\\
		&=
		\frac{1}{\pi^2}x.
	\end{align*}
	Thus
	$
	\delta(V)
	=
	\delta(\N_{7,8}^{\mathrm{sf}})
	=
	\frac{1}{\pi^2}.
	$
	It follows that
	$
	\delta_{\mathrm{rel}}(V)
	=
	\frac{\delta(V)}
	{\delta(\N^{\mathrm{sf}})}
	=
	\frac{1/\pi^2}{6/\pi^2}
	=
	\frac16.
	$
	Therefore,
	$
		\delta_{\mathrm{rel}}(U)
		=
		1-\frac16
		=
		\frac56.
	$
	This completes the proof.
\end{proof}

%\section*{Acknowledgments} 
%The author would like to express his sincere gratitude to Prof. Nuno Freitas for his invaluable assistance in understanding the article \cite{FS15}. The author also extends his heartfelt appreciation to Prof. Narasimha Kumar for the insightful discussions and helpful comments. The author is grateful to the anonymous referee for the thoughtful comments and valuable suggestions, which have greatly improved the quality of this article.

\end{document}